\newcommand\summaryname{Abstract}
\newenvironment{Abstract}%
    {\small\begin{center}%
    \bfseries{\summaryname} \end{center}}
\definecolor{MyDarkGreen}{rgb}{0,0.2,0}
\definecolor{MyGreen}{rgb}{0,0.6,0}
\DeclareMathOperator*{\divergenza}{div}
\newtheorem{theorem}{Theorem}[section]
\newtheorem{cor}{Corollary}[theorem]
\newtheorem{prop}[theorem]{Proposition}
\newtheorem{lemma}[theorem]{Lemma}
\newtheorem{definition}[theorem]{Definition}
\newlist{steps}{enumerate}{1}
\setlist[steps, 1]{label = Step \arabic*:}
\title{A stationary solution for the mixed turbulence shell model}
\author{Alessandro Montagnani}
\begin{document}

\maketitle

\begin{Abstract}
The aim of this work is to prove an existence result on the mixed shell model extending the classic standard existence results from $\ell^2$ initial conditions to $\mu$-almost every initial conditions, where $\mu$ is a Gaussian measure on the infinite dimensional space of initial conditions.





A similar result is also shown to hold for turbulence models where a dyadic tree structure replaces the linear one.

\end{Abstract}

\begin{section}{Introduction}
The dynamic of a fluid can be described by the Navier-Stokes equation. However, there are still aspects of fluid dynamics that are not completely understood, such as turbulence. For this reason, several simplified models have been introduced, including shell models. They have now a long and established history, starting with Novikov in the 1970s~\cite{DesNov1974}. Since then, they have been extensively studied, in several different variants, see for example~\cite{Bif2003} or~\cite{AleBif2018}, and many results are now known, see~\cite{7} for a deeper dive.
From a mathematical perspective, different model choices are possible, focusing on different properties of the fluids that one wants to investigate with these simplified models. Among such models, we can consider the dyadic models and their variants, with nonlinearity as the one introduced by Novikov or the one discussed by Obukhov: a discussion concerning these modes can be found in~\cite{Wal2006}.
To give here just a quick idea, these models focus on the energy cascade, that is the transfer of (kinetic) energy in the fluid between different scales.
The nonlinearity chosen by Novikov~\cite{DesNov1974} is particularly well suited to model the forward cascade, in which the energy moves from large scales to small ones, whereas the Obukhov~\cite{Obu1971} nonlinearity captures the backward cascade, with energy crawling up from small scales to big ones. 
The structure of a dyadic shell model is quite simple, but still captures some interesting features of Euler and Navier-Stokes equations.

Even if the dynamics of a shell model is simpler than the one of Navier-Stokes equation, the existence of solutions for any initial condition can still be a difficult problem, depending on the model considered. If the quantity $\sum_k X_k (t)^2$ (called energy, as it can be identified with the kinetic energy of the fluid) is preserved along the trajectories, for inviscid and unforced shell models with terms interacting only with their nearest neighbors in the system, one has the existence for initial conditions with finite energy, i.e.~for $\ell^2$ initial conditions.
%

With the aim to extend this result  we consider an inviscid and unforced mixed shell model, 
\[
    \frac{d}{dt} X_n (t) = k_n X_{n-1} ^2 (t)- k_{n+1} X_n (t) X_{n+1} (t) - k_n X_{n+1} ^2 (t) + k_{n-1} X_{n-1} (t) X_n (t)  ,
\]
with $k_n = \lambda ^{n}$, $\lambda >1$ for $n >1$ and $k_0 = k_1 = 0$.
This model has both Obukhov-like and Novikov-like non linearities, $X_{n-1} (t) X_n (t)$ and $X_n (t) X_{n+1} (t)$, respectively, and has been discussed also in~\cite{JeoLi2015, Jeo2019}.
In this paper, we are choosing coefficients $k_n$ so that a suitable class of Gaussian measures $\{\mu_r\}_{r>0}$ is invariant for the system.

The goal is to prove that, given $r>0$, there exists a solution for $\mu_r$-a.e. initial conditions, i.e. given $\bar{x} \in \mathbb{R}^{\infty}$, $\mu_r$-almost surely there exists a function $X(t)$ that solves component-wise the system with initial condition $\bar{x}$ in the integral form.  Such a function is obtained as a limit, to do this we consider first a sequence of finite dimensional systems with increasing dimension that approximates the mixed shell model in the Galerkin sense, for any of those finite dimensional systems we have existence and uniqueness of solutions. Hence for any system of the sequence of any dimension $N$ we can introduce a Gaussian measure $\mu_r ^N$ and a random variable $U^N_r$ that picks, randomly with law $\mu_r ^N$, $\bar{x}\in \mathbb{R}^{\infty}$ and gives the unique solution of the system with initial condition $\bar{x}$. The laws of said functions (embedded in the same proper space) have a weak limit, thanks to a combination of Aubin-Lions lemma and Prohorov theorem, and using Skorokhod representation theorem one can pass from a weak limit to an almost sure limit. 
In fact, we take inspiration from the techniques introduced by Albeverio and coauthors in \cite{N1,N2,N3} and later used by Flandoli and Flandoli and Gatarek in \cite{NN1,N6}, adapting them here for a turbulence shell model.

Uniqueness for the mixed shell model is a big issue. One has uniqueness results for shell models in the classical dyadic model with Novikov-like non linearity assuming a positive initial condition. However these do not hold when starting from initial conditions with infinitely-many negative components, a case in which we have infinitely-many solutions (see \cite{1,2, NN3}). 
However these models became a testing bench for techniques of regularisation by noise, since uniqueness can be restored, under suitable conditions, when the model is perturbed with multiplicative noise, see~\cite{BarFlaMorPAMS2010, Bia2013}, or the review paper~\cite{BiaFla2020}.
%
%
Another related topic of research on shell models has to do with blow-up in finite time of solution, in the general framework of the study of global existence of solutions in fluid dynamics. For more along this direction, we refer to~\cite{CheFriPav2007JMP, Che2008TAMS}, and related papers.

Since with our method we work with random initial conditions and solutions having a centred Gaussian measure as law, we are not able to apply the arguments done before for other shell models to extract a unique solution, moreover we don't expect to have the uniqueness since it looks an environment more similar to the one in the classical dyadic model with infinitely-many negative components.

All the arguments of this work can be replicated on a turbulence tree model with a suitable choice of coefficients. The first version of a turbulence tree model, with a nonlinearity of Novikov type, was given in \cite{13}, and was later studied in \cite{NN2} and \cite{NN4}, and we use the same notation of those latter works. A first discussion of the backward cascade as a dyadic tree model can be found in~\cite{BiaFlaMetMor2020}. Usually tree models extend the dynamics of a shell models and they mimic better the phenomenon of bigger eddy splitting in smaller eddies with energy transfer. As done in the dyadic environment, we prove the existence $\mu_r$-almost surely for any initial condition $\bar{x}$ of a component-wise solution of a tree model that has $\mu_r$ as invariant measure for the dynamic.

Let us conclude this introduction by giving a brief overview of the contents of this paper. In Section~\ref{sec:model}, we properly define the model we are dealing with, prove existence of solution and show explicitly a family of invariant measures. Section~\ref{sec:randomic} introduces solutions with random initial conditions, and includes estimates on the norms of such solutions. Section~\ref{sec:compactness} is centered on a compactness result, needed in the following Section~\ref{sec:randsol}, where we put together all the previous results to show the existence of solutions for almost every initial condition. The final Section~\ref{sec:tree} shows how to extend these results for the much more challenging tree model.

\end{section}

\begin{section}{Suitable coefficients for the model}\label{sec:model}

The mixed shell model that we consider in this work is the infinite dimensional dynamical system defined for $t\in [0,T]$ described for each $n\in\mathbb{N}\setminus \{0\}$ by the following equations:
$$
    \frac{d}{dt} X_n (t) = k_n X_{n-1} ^2 (t)- k_{n+1} X_n (t) X_{n+1} (t) - k_n X_{n+1} ^2 (t) + k_{n-1} X_{n-1} (t) X_n (t)  ,
$$
with $k_n = \lambda ^{n}$, $\lambda >1$ for $n >1$ and $k_0 = k_1 = 0$.

The mixed shell model is formally conservative, in the sense that an energy quantity $\mathcal{E}(t) = \sum_n X_n ^2 (t)$ is formally conserved by the equations of the dynamic, as it will be clear later in this section. 
Thanks to this property we get the existence of solutions for $\ell ^2$ initial conditions but we are not able at the moment to extend this results for all initial conditions using only the energy conservation. 
However, the energy conservation is our starting building block to get at least the existence and uniqueness of solutions for the approximating Galerkin sequence of systems, since in each finite system we get trivially that every initial condition belongs to the space $\ell^2$. This is why we will look to a conservative Galerkin approximation.  
So from now on we will call $N$-dimensional shell model the $N$-dimensional dynamical system described by the following equations, for $1 \leq n\leq N$:
$$
  \frac{d}{dt} X_n (t) = k_n X_{n-1} ^2 (t)- k_{n+1} X_n  (t) X_{n+1} (t) - k_n X_{n+1} ^2 (t) + k_{n-1} X_{n-1} (t) X_n (t)  ,
$$
with $k_n = \lambda ^{n}$, $\lambda >1$ for $1<n<  N$ and $k_0 = k_1 = k_N = k_{N+1} = 0$.
 
The last equations are well defined even if apparently there are terms with no meaning: since the coefficient for those terms is 0, actually there is no need to give meaning to non defined terms and the dynamic system has effective dimension of $N$.

\begin{definition}
    With the notation used above, a function $X \in {C} ^1 ([0,T];\mathbb{R} ^N)$ is said to be a solution of the N-dimensional shell model with initial condition $X_0 \in \mathbb{R}^N$ if for each $1\leq n\leq N$ it satisfies the equation  
    $$
        \frac{d}{dt} X_n (t) = k_n X_{n-1} ^2 (t) - k_{n+1} X_n (t) X_{n+1} (t) - k_n X_{n+1} ^2 (t) + k_{n-1} X_{n-1} (t) X_n (t)  ,
    $$
    and if $X(0)=X_0.$
\end{definition}

The Galerkin approximation is built to let every system of the approximating sequence to be conservative, and below we give a proof.

\begin{definition}
    For any solution of the $N$- dimensional shell model we define the kinetic energy $\mathcal{E} (t)$ as the sum
    $$
        \mathcal{E} (t) = \sum_{n=1} ^N X_n ^2 (t)=\|X(t)\|_2^2.
    $$
\end{definition}

\begin{prop}
    The kinetic energy $\mathcal{E} (t)$ is invariant for all the solutions of the N-dimensional shell model.
\end{prop}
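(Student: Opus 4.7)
The plan is to compute $\frac{d}{dt}\mathcal{E}(t)$ directly and show the resulting sum collapses to zero. Since $X \in C^1([0,T];\mathbb{R}^N)$, we can differentiate under the finite sum to obtain $\frac{d}{dt}\mathcal{E}(t) = 2\sum_{n=1}^N X_n(t)\dot X_n(t)$, and then substitute the right-hand side of the $N$-dimensional shell model. This produces four finite sums, one for each of the four terms in the equation:
\begin{align*}
S_1 &= 2\sum_{n=1}^N k_n\, X_n X_{n-1}^2, & S_2 &= -2\sum_{n=1}^N k_{n+1}\, X_n^2 X_{n+1},\\
S_3 &= -2\sum_{n=1}^N k_n\, X_n X_{n+1}^2, & S_4 &= 2\sum_{n=1}^N k_{n-1}\, X_{n-1} X_n^2.
\end{align*}

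Next, I would pair $S_1$ with $S_2$ and $S_3$ with $S_4$ by reindexing. In $S_2$, setting $m=n+1$ yields a sum over $m=2,\dots,N+1$ of $-2 k_m X_{m-1}^2 X_m$; the boundary convention $k_{N+1}=0$ kills the top term, and the convention $k_1=0$ lets me also drop $m=1$, giving exactly $-S_1$ after noting that the remaining summand matches the one appearing in $S_1$. Analogously, setting $m=n-1$ in $S_4$ and using $k_0=0$ and $k_N=0$ (which suppresses the boundary contributions at $n=1$ and $n=N$ in $S_3$ as well) shows $S_4=-S_3$. Summing, $S_1+S_2+S_3+S_4=0$.

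The only place where care is needed is the treatment of the boundary indices: one must check that every term involving an undefined quantity such as $X_0$, $X_{N+1}$ or $X_{-1}$ carries a coefficient that the definition sets to zero, so these terms are harmless, and that the vanishing of $k_0, k_1, k_N, k_{N+1}$ is precisely what makes the reindexing exact rather than approximate. Since the sums are finite, there are no convergence subtleties. Concluding, $\frac{d}{dt}\mathcal{E}(t)\equiv 0$ on $[0,T]$, hence $\mathcal{E}(t)=\mathcal{E}(0)$ for every solution, which is the claim. I expect no serious obstacle; the delicate point is simply bookkeeping at the two boundaries $n=1$ and $n=N$, which is exactly why the Galerkin truncation was chosen with the specific zeros $k_0=k_1=k_N=k_{N+1}=0$.
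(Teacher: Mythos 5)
Your proof is correct and takes essentially the same approach as the paper: differentiate the finite sum, substitute the equations, and let the four resulting sums cancel pairwise by an index shift (telescoping), with the boundary contributions annihilated by the choice $k_0=k_1=k_N=k_{N+1}=0$. The paper simply writes down the surviving boundary terms $2(k_1 X_0^2 X_1 - k_{N+1}X_N^2 X_{N+1} - k_N X_N X_{N+1}^2 + k_0 X_0 X_1^2)=0$ directly instead of spelling out the reindexing as you do.
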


\begin{proof}
    We have to show that $\frac{d}{dt} \mathcal{E} (t) = 0,$
    \begin{align*}
        \frac{d}{dt} \mathcal{E} (t) =& \frac{d}{dt} \sum_{n=1} ^N X_n ^2 (t)\\
        =&2 \sum_{n=1}^N (k_n X_n X_{n-1} ^2 - k_{n+1} X_i ^2 X_{n+1} -k_n X_n X_{n+1} ^2 + k_{n-1} X_{n-1} X_n ^2 )\\
        =& 2 (k_1 X_0 ^2 X_1 - k_{N+1} X^2 _N X_{N+1} -k_N X_N X_{N+1} ^2 +k_0 X_0 X_1 ^2 ) =0.   \qedhere
    \end{align*}
\end{proof}

Thanks to the conservation of the kinetic energy we can now show the existence (and uniqueness) of solutions of the $N$-dimensional approximation for any initial condition. 

\begin{prop}
    For any $x_0 \in \mathbb{R}^N$ there exists a unique solution of the N-dimensional shell model with initial condition $X (0) = x_0 $.
\end{prop}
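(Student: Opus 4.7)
The strategy is the standard one for autonomous polynomial ODE systems: combine local existence–uniqueness (from smoothness of the vector field) with a global a priori bound (from the energy conservation already proved) to conclude global existence and uniqueness.

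First I would write the system compactly as $\dot{X} = F(X)$ where $F \colon \mathbb{R}^N \to \mathbb{R}^N$ has components that are quadratic polynomials in the coordinates $X_1, \dots, X_N$. Since $F$ is a polynomial map it is $C^\infty$, and in particular locally Lipschitz on every ball of $\mathbb{R}^N$. The classical Picard–Lindelöf theorem then yields, for any initial condition $x_0 \in \mathbb{R}^N$, a unique maximal solution $X \in C^1([0, T_{\max}); \mathbb{R}^N)$ with $X(0) = x_0$, where $T_{\max} \in (0, +\infty]$.

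The next step is to rule out finite-time blow-up, i.e.\ to show $T_{\max} > T$. For this I would invoke the previous proposition: along the maximal solution the kinetic energy is conserved, so
\[
    \|X(t)\|_2^2 \;=\; \mathcal{E}(t) \;=\; \mathcal{E}(0) \;=\; \|x_0\|_2^2
    \qquad \text{for all } t \in [0, T_{\max}).
\]
Thus the trajectory stays in the compact ball of radius $\|x_0\|_2$ of $\mathbb{R}^N$. The standard blow-up alternative for ODEs with locally Lipschitz right-hand side states that if $T_{\max} < +\infty$ then $\|X(t)\|_2 \to \infty$ as $t \to T_{\max}^-$; the above bound rules this out, so $T_{\max} = +\infty$ and in particular the solution is defined on $[0, T]$.

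Uniqueness on $[0, T]$ follows directly from the local uniqueness part of Picard–Lindelöf: if two $C^1$ solutions coincide at $t=0$, the set on which they agree is closed (by continuity) and open (by local uniqueness applied at any agreement point), hence is all of $[0,T]$. There is no real obstacle in this argument; the only point one has to be careful about is the step that promotes local to global existence, and that step is handled entirely by the energy conservation established just above.
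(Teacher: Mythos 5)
Your proposal is correct and follows essentially the same route as the paper: local existence and uniqueness from Cauchy--Lipschitz/Picard--Lindel\"of for the smooth quadratic vector field, then globalization on $[0,T]$ via the conserved energy $\|X(t)\|_2^2=\|x_0\|_2^2$, which rules out blow-up. Your write-up simply makes explicit the blow-up alternative and the connectedness argument for uniqueness that the paper leaves implicit.
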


\begin{proof}
    The truncated system satisfies the Cauchy-Lipschitz theorem for the existence of solutions on a ODEs system, 
    and so it admits a local solution $X^N$ on $[0,\delta]$ for some $\delta >0$. We can extend the local solution to a global solution using the energy bound: 
    \begin{equation*}
        \sum_{n=1} ^N X_n ^2 (t) = \sum_{n=1} ^N X_n ^2 (0). \qedhere
    \end{equation*}
\end{proof}

As we mentioned before, the infinite shell model admits a solution for $\ell^2$ initial condition.

\begin{theorem}
    With the same notation of this section, consider the infinite dimensional shell model
    \begin{align*}
        &\frac{d}{dt} X_n (t) = k_n X_{n-1} ^2 (t)- k_{n+1} X_n (t) X_{n+1} (t) - k_n X_{n+1} ^2 (t) + k_{n-1} X_{n-1} (t) X_n (t) \\
        &X(0) = \bar{X}.
    \end{align*}
    For any initial condition $\bar{X} \in \ell^2$ there exists at least a solution $X(t)$ on $[0,T]$.
\end{theorem}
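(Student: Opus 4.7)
The plan is to run a Galerkin approximation argument, exploiting the energy conservation that has already been established for each truncated system. For each $N\geq 1$, let $X^N\in C^1([0,T];\mathbb{R}^N)$ be the unique solution of the $N$-dimensional shell model with initial condition $(\bar X_1,\ldots,\bar X_N)$, and extend it by zero to an element of $\ell^2$. The energy identity from the previous proposition gives
\[
\sum_{n=1}^N (X^N_n(t))^2 \;=\; \sum_{n=1}^N \bar X_n^2 \;\leq\; \|\bar X\|_{\ell^2}^2,
\]
uniformly in $N$ and $t\in[0,T]$. In particular, for every fixed $n$, the sequence $\{X^N_n\}_{N\geq n+1}$ is uniformly bounded on $[0,T]$ by $\|\bar X\|_{\ell^2}$.

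Next I would get equicontinuity. Because the right-hand side of the $n$-th equation only involves the components of indices $n-1,n,n+1$ (with coefficients $k_{n-1},k_n,k_{n+1}$), the previous $L^\infty$ bound yields
\[
\left|\tfrac{d}{dt} X^N_n(t)\right| \;\leq\; (k_{n-1}+2k_n+k_{n+1})\,\|\bar X\|_{\ell^2}^2,
\]
which is a bound depending on $n$ but independent of $N$ and $t$. Thus for each fixed $n$ the family $\{X^N_n\}_N$ is equibounded and uniformly Lipschitz on $[0,T]$. By Arzelà--Ascoli combined with a diagonal extraction in $n$, I get a subsequence (still denoted $X^N$) and a family $\{X_n\}_{n\geq 1}$ of continuous functions on $[0,T]$ such that $X^N_n \to X_n$ uniformly on $[0,T]$ for every $n$.

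It remains to pass to the limit in the equations written in integral form,
\[
X^N_n(t)=X^N_n(0)+\int_0^t\!\bigl[k_n(X^N_{n-1})^2-k_{n+1}X^N_nX^N_{n+1}-k_n(X^N_{n+1})^2+k_{n-1}X^N_{n-1}X^N_n\bigr]\,ds.
\]
For each fixed $n$ the integrand is a polynomial in the three uniformly convergent, uniformly bounded sequences $X^N_{n-1},X^N_n,X^N_{n+1}$, so the integrand converges uniformly on $[0,T]$ and the integral passes to the limit. The left-hand side converges to $X_n(t)$ and $X^N_n(0)\to \bar X_n$ since $N\geq n$ eventually, giving the equation componentwise together with the initial condition. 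Finally, Fatou's lemma applied to the conserved sums yields $\|X(t)\|_{\ell^2}\leq \|\bar X\|_{\ell^2}$, so $X(t)\in\ell^2$ for every $t\in[0,T]$.

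The argument is essentially routine; the only subtle point is that I do not try to control $\tfrac{d}{dt}X^N_n$ uniformly in $n$ (which would require bounds on $k_n=\lambda^n$ weighted by the energy, and these blow up), but instead use only the componentwise equicontinuity, which is enough because the nonlinearity is strictly local. Uniqueness and genuine $C^1$ regularity are not claimed and are not needed at this level; the limit $X$ is only a componentwise integral solution, consistent with the viewpoint adopted in the rest of the paper.
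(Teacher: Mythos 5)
Your proposal is correct and follows essentially the same route as the paper: Galerkin truncations, the conserved energy for the uniform bound, the componentwise ($N$-independent but $j$-dependent) Lipschitz bound, Arzel\`a--Ascoli with a diagonal extraction, and passage to the limit in the integral form of each equation. The only addition beyond the paper's argument is the explicit Fatou step showing the limit stays in $\ell^2$, which is a harmless refinement.
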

\begin{proof}
    We will use the Ascoli-Arzel\`{a} theorem. For any $\bar{X}$, we consider the sequence of solutions for $N\geq 1$ of the $N$-dimensional shell model $\tilde{X}^N (t)$ obtained considering as initial condition $\bar{X}^N$ the first $N$-entries of $\bar{X} $.
    With abuse of notation we consider all the functions $X^N(t)$ embedded in the same infinite dimensional space by taking the value 0 on the empty entries.
    For every fixed $j$ and $t\in [0,T]$ it holds:
    \begin{itemize}
        \item Uniform boundedness of $\{ X^N _j (t)\}_{N\in \mathbb{N}}$ for both $N$ and $t$:
        $$ 
            |X_j^N (t)|^2 \leq \| X^N (t)\|_2 ^2 =\|X^N(0)\|^2_2 \leq \| \bar{X} \|_2 ^2 .
        $$
        \item Equi-Lipschitzianity of $\{ X^N _j (t)\}_{N\in \mathbb{N}}$ with respect to $N$:
        \begin{align*}
            |\frac{d}{dt} X_j^N (t) |\leq & |k_j X^2_{j-1}(t)|+|k_{j+1} X_j (t) X_{j+1} (t)|+|k_j X^2_{j+1}|+|k_{j-1} X_{j-1} (t) X_{j} (t)| \\
            \leq & \| \bar{X}\|_2 ^2 (2k_j +k_{j+1} +k_{j-1}).
        \end{align*}
    \end{itemize}
    Ascoli-Arzel\`{a} theorem implies for each fixed $j$ the existence of a converging subsequence in $C([0,T])$, i.e. it is possible to find indices $\{ N_k ^j , k\in \mathbb{N}\}$ such that 
    $$
        \sup_{t\in [0,T]} |X_j^{N_k ^j} (t) - X_j (t) |\rightarrow 0 
    $$ 
    for fixed $j$ as $k\rightarrow\infty$.
    
    The sequence $N_{\bullet} ^j$ can be chosen so that $N_{\bullet} ^{j+1}$ is a subsequence of $N_{\bullet} ^j$ itself. By a standard diagonal argument we can extend the convergence to all $j$. 
    If we consider indices $N_k = N_k^k$, we are extracting a common subsequence such that 
    $$
        \sup_{t\in [0,T]} |X_j^{N_k} (t) -X_j(t)|\rightarrow 0
    $$
    for all $j\geq 0$, as $k\rightarrow \infty$.

    Lastly it is straightforward to check that the limit obtained via Ascoli-Arzel\`a theorem, $X(t)$, is a solution for our system, using the equation in the integral form.
\end{proof}

Differently from many other conservative shell models, the mixed model has the key property of the existence of invariant Gaussian measures. At this point we introduce the Gaussian Measure  $\mu^N _r$ on $\mathbb{R}^N$  as 
$$
        \mu^N _r = \bigotimes _{i=1} ^N \mathcal{N} (0,r^2).
    $$

Before proving that the Gaussian measure $\mu^N_r$ is invariant for the trajectories of the $N$-dimensional shell model we need first a technical lemma. So let $\varphi_t (x) $ be the solution with initial conditions $x$.
Let also $P_{t}$ and $P_{t}^{\ast}$ be the semigroups:
\[
    \left(  P_{t}g\right)  \left(  x\right)  :=g\left(  \varphi_{t}\left(x\right)  \right)
\]%
\begin{align*}
    \left(  P_{t}^{\ast}\mu\right)  \left(  g\right)    & := 
    \int_{\mathbb{R}^{N}} g \left( \varphi_{t}\left(  x\right)  \right)  \mu\left(  dx\right)  \\
    & =\mu\left(  P_{t}g\right),
\end{align*}
where $g$ is bounded measurable and $\mu$ is a probability measure. We have
used the notation $\mu\left(  f\right)  :=\int_{\mathbb{R}^{N}}f\left(
x\right)  \mu\left(  dx\right)  $. Notice that $P_{t}^{\ast}\mu$ is a new
probability measure. 

\begin{lemma}
\label{31}
    Let $X,Y$ be two classes of smooth functions $g:\mathbb{R}^{N}\rightarrow
    \mathbb{R}$ with the following properties:%
    \[
        P_{t}\left(  X\right)  \subset Y\text{ for every }t\geq0
    \]%
    \[
        \frac{d}{dt}\left(  P_{t}^{\ast}\mu\right)  \left(  g\right)  |_{t=0}=0\text{
        for all }g\in Y.
    \]
    Then%
    \[
        \frac{d}{dt}\left(  P_{t}^{\ast}\mu\right)  \left(  g\right)  =0\text{ for all
        }g\in X\text{ and }t\geq0\text{.}%
    \]
\end{lemma}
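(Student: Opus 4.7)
The plan is to reduce the derivative at arbitrary time $t \ge 0$ to the derivative at time $0$ by invoking the semigroup property of $P_t$, so that the second hypothesis can be applied.

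First I would record the semigroup identity $P_{s+t} = P_s \circ P_t$, which follows immediately from the flow property $\varphi_{s+t} = \varphi_s \circ \varphi_t$ of the ODE; this in turn gives the dual identity $P_{s+t}^{\ast}\mu = P_s^{\ast}(P_t^{\ast}\mu)$ on probability measures, or, applied to a test function, $(P_{s+t}^{\ast}\mu)(g) = \mu(P_s P_t g)$.

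Next, fix $g \in X$ and $t \ge 0$. For $h \ge 0$, write
\[
    (P_{t+h}^{\ast}\mu)(g) = \mu(P_{t+h} g) = \mu(P_h (P_t g)) = (P_h^{\ast}\mu)(P_t g).
\]
By the first hypothesis $P_t g \in Y$. Differentiating in $h$ at $h = 0$, and using the second hypothesis applied to the function $P_t g \in Y$, yields
\[
    \frac{d}{dh}(P_{t+h}^{\ast}\mu)(g)\bigg|_{h=0} = \frac{d}{dh}(P_h^{\ast}\mu)(P_t g)\bigg|_{h=0} = 0.
\]
Since $\frac{d}{dh}(P_{t+h}^{\ast}\mu)(g)|_{h=0} = \frac{d}{dt}(P_t^{\ast}\mu)(g)$, this gives the desired conclusion for every $t \ge 0$.

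The main (mild) obstacle is justifying that the derivative in $h$ actually exists in the first equality of the final display; however this is immediate because the right-hand side is exactly the derivative of $(P_s^{\ast}\mu)(\tilde g)$ at $s = 0$ with $\tilde g = P_t g \in Y$, whose existence and vanishing is precisely what the second hypothesis asserts. So the argument is essentially a one-line translation via the semigroup law, and the content of the lemma is the standard principle that invariance at time $0$ propagates along the flow.
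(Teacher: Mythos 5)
Your argument is exactly the paper's: both reduce the derivative at time $t$ to the derivative at time $0$ via the identity $(P_{t+h}^{\ast}\mu)(g) = (P_h^{\ast}\mu)(P_t g)$ coming from the flow property, and then apply the hypothesis to $P_t g \in Y$. The proposal is correct and takes essentially the same approach as the paper.
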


\begin{proof}
    First notice that the following holds%
    \[
        \left(  P_{t+\epsilon}^{\ast}\mu\right)  \left(  g\right)  =\left(
        P_{\epsilon}^{\ast}\mu\right)  \left(  P_{t}g\right)
    \]
    since we have that 
    \begin{align*}
        \left(
        P_{\epsilon}^{\ast}\mu\right)  \left(  P_{t}g\right) &
        = \int_{\mathbb{R}^N} \left( P_tg \right)\left(  \varphi_{\varepsilon} (x) \right) \mu(dx) \\
        & =\int_{\mathbb{R}^N}g(\varphi_t (\varphi_{\varepsilon}(x)))\mu(dx) \\
        &=\int_{\mathbb{R}^N}g(\varphi_{t+\varepsilon} (x))\mu(dx) \\
        &=\left(  P_{t+\epsilon}^{\ast}\mu\right)  \left(  g\right).
    \end{align*}

    Then we get the thesis by the following computation: 
    \begin{align*}
        \frac{d}{dt}\left(  P_{t}^{\ast}\mu\right)  \left(  g\right)    &
        =\lim_{\epsilon\rightarrow0}\frac{\left(  P_{t+\epsilon}^{\ast}\mu\right)
        \left(  g\right)  -\left(  P_{t}^{\ast}\mu\right)  \left(  g\right)}{\epsilon}\\
        & =\lim_{\epsilon\rightarrow0}\frac{\left(  P_{\epsilon}^{\ast}\mu\right)
        \left(  P_{t}g\right)  -\mu\left(  P_{t}g\right)  }{\epsilon}\\
        & =\frac{d}{ds}\left(  P_{s}^{\ast}\mu\right)  \left(  P_{t}g\right)
        |_{s=0}=0. \qedhere
    \end{align*}
\end{proof}

Hence we can go for the main result of this section:

\begin{prop}
\label{32}
    For all $r>0$, the measure $\mu ^N _r$ is invariant for the N-dimensional shell model. 
\end{prop}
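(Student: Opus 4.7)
The plan is to apply Lemma~\ref{31} with $X$ the class of polynomials on $\mathbb{R}^N$ and $Y$ the class of smooth functions all of whose partial derivatives have at most polynomial growth. The generator of the flow is the first-order operator
\[
(Lg)(x) = \sum_{n=1}^N b_n(x)\,\partial_n g(x),
\qquad
b_n(x) = k_n x_{n-1}^2 - k_{n+1} x_n x_{n+1} - k_n x_{n+1}^2 + k_{n-1} x_{n-1} x_n,
\]
so that $\tfrac{d}{dt}(P_t^{\ast}\mu)(g)|_{t=0} = \int_{\mathbb{R}^N}(Lg)\,d\mu_r^N$ by differentiating under the integral sign, which is legal on $Y$ thanks to the polynomial growth.

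To verify $P_t(X)\subset Y$ I would invoke the energy identity $\|\varphi_t(x)\|_2 = \|x\|_2$ from the previous proposition. The vector field $b$ is a smooth polynomial map, so $\varphi_t$ is smooth, and for any polynomial $g$ of degree $d$ one has $|g(\varphi_t(x))| \leq C(1+\|x\|_2^d)$ uniformly in $t\in[0,T]$; differentiating $g\circ\varphi_t$ produces bounds of the same polynomial type on all derivatives. Hence $P_tg \in Y$.

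The core computation is then to show $\int_{\mathbb{R}^N} Lg\, d\mu_r^N = 0$ for every $g\in Y$. Integration by parts against the product Gaussian density gives
\[
\int b_n\,\partial_n g\, d\mu_r^N \;=\; \int g\left(\frac{x_n b_n}{r^2} - \partial_n b_n\right) d\mu_r^N,
\]
and summing in $n$ reduces the claim to the pointwise identity
\[
\sum_{n=1}^N \frac{x_n b_n(x)}{r^2} \;-\; \sum_{n=1}^N \partial_n b_n(x) \;=\; 0.
\]
The first sum vanishes identically as a polynomial: up to the factor $2$ it is exactly the quantity computed in the energy-conservation proposition, where all bulk terms telescope and the surviving boundary terms are killed by $k_0=k_1=k_N=k_{N+1}=0$. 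For the second sum one computes $\partial_n b_n = k_{n-1}x_{n-1} - k_{n+1}x_{n+1}$, whose telescoping yields $k_0 x_0 + k_1 x_1 - k_N x_N - k_{N+1}x_{N+1}$, again zero by the same boundary convention. Crucially, this is exactly where the Galerkin truncation convention $k_N=0$ (and not merely $k_{N+1}=0$) is needed: the invariance of the Gaussian is built into the very choice of how one cuts off the system.

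With the hypotheses of Lemma~\ref{31} verified, we obtain $(P_t^{\ast}\mu_r^N)(g) = \mu_r^N(g)$ for every polynomial $g$. To promote this to equality of measures I would observe that energy conservation forces $P_t^{\ast}\mu_r^N$ to have the same radial moment bounds as $\mu_r^N$, so both measures are determined by their polynomial moments (Gaussian moments satisfy Carleman's condition), and consequently $P_t^{\ast}\mu_r^N = \mu_r^N$. The main obstacle is really the bookkeeping in the boundary-telescoping step, since the invariance relies delicately on the interplay between the chosen nonlinearity and the truncation convention.
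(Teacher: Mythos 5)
Your proposal is correct and follows essentially the same route as the paper: reduce to $t=0$ via Lemma~\ref{31}, integrate by parts against the Gaussian density, and split the resulting pointwise identity $\operatorname{div}(bf)=0$ into the energy-conservation sum $\sum_n x_n b_n = 0$ and the telescoping divergence sum $\sum_n \partial_n b_n = 0$, both killed by the boundary convention $k_0=k_1=k_N=k_{N+1}=0$. Your version is somewhat more scrupulous than the paper's (explicit choice of the classes $X$ and $Y$, and the closing moment-determinacy step to pass from polynomial test functions to equality of measures, where the paper simply works with ``$g$ regular enough''), but these are refinements of the same argument rather than a different one.
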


\begin{proof}
    Let $b:\mathbb{R}^N \rightarrow \mathbb{R}^N $ be the vector field, $$b_n(x) = k_n x_{n-1} ^2 - k_{n+1} x_n x_{n+1} - k_n x_{n+1} ^2 + k_{n-1} x_{n-1} x_n.$$
    If $\varphi_t (x) $ is the solution with initial conditions $x$, it would be sufficient to have for each $g : \mathbb{R} ^N \rightarrow \mathbb{R}$ regular enough:
    $$
        \frac{d}{dt} \int_{\mathbb{R} ^N } g(\varphi _t (x) ) \mu _r ^N (dx) = 0,
    $$
    to prove that $\mu _r ^N $ in an invariant measure for the system. Note that by Lemma~\ref{31} it is sufficient to prove the equality for $t=0$. We have
    \[
        \frac{d}{dt} \int_{\mathbb{R} ^N } g(\varphi _t (x) ) \mu _r ^N (dx)
        = \int_{\mathbb{R} ^N } \bigtriangledown g(\varphi _t (x) ) \frac{d}{dt} \varphi_t (x) \mu _r ^N (dx) = 0 ,
    \] 
    looking for $t=0$
    $$ 
        \int_{\mathbb{R} ^N } \bigtriangledown g(x) b(x) \mu _r ^N (dx) = 0,
    $$
    let $f(x)$ be such that $\mu_r ^N (dx)= f(x) dx$, so
    $$ 
        \int_{\mathbb{R} ^N } \bigtriangledown g(x) b(x) f(x) dx = 0,
    $$
    and applying Gauss-Green formula we have 
    $$
        \int_{\mathbb{R} ^N } g(x) \divergenza (b(x) f(x)) dx =0 ,
    $$
    that gives 
    $$
        \divergenza (b(x) f(x)) = 0.
    $$ 
    
    For Gaussian measures $\mu_r ^N $we have $f(x) = c e^{\frac{-\| x \| ^2}{r^2}} $ for some constant $c>0$, so we have to show that
    $$
        \divergenza(b(x)f(x)) = \sum_{i=1} ^N \frac{\partial}{\partial x_i } [b_i(x)f(x)]  =
    $$
    $$
        c e^{{\frac{-\| x \|^2}{r^2}}}\sum_{n=1} ^N  [(r^{-2} x_n )(k_n x_{n-1} ^2 - k_{n+1} x_i x_{n+1} -k_i x_{n+1} ^2 +k_{n-1} x_{n-1} x_n) -$$ $$-k_{n+1} x_{n+1} + k_{n-1} x_n] = 0.
    $$
    So it is sufficient to prove the following conditions,
    
    \begin{enumerate}
        \item $\sum_{n=1}^N k_n x_n x_{n-1} ^2 - k_{n+1} x_i ^2 x_{n+1} -k_n x_n x_{n+1} ^2 + k_{n-1} x_{n-1} x_n ^2 =0 ,$
    
        \item $\sum_{n=1} ^N -k_{n+1} x_{i+1} + k_{n-1} x_{n-1} = 0.$
    \end{enumerate}
    The first condition follows directly from $\frac{d}{dt} \mathcal{E} (t) = 0$.

    Computing the second condition we have:
    \begin{multline*}
        \sum_{n=1} ^N -k_{n+1} x_{n+1} + k_{n-1} x_{n-1} = k_0 x_0 + k_1 x_1 -k_N x_N -k_{N+1} x_{N+1} + \\ +\sum_{n=2} ^{N-1} (-k_n + k_n) x_n = 0.
    \end{multline*}
    This concludes the proof.
\end{proof}

The reader can note that one cannot choose more general coefficients of the mixed shell model than we did and have a certain Gaussian measure to be invariant. We mean that if one considers this dynamical system, which is general given the hypothesis of quadratic terms, interactions of term $n$ only with terms $n-1$,$n$, $n+1$ , similar form for any $n$ and energy conservation,
$$
    \frac{d}{dt} X_n (t) = (k_n X_{n-1} ^2 (t)- k_{n+1} X_n (t) X_{n+1} (t)) - (h_n X_{n+1} ^2 (t) + h_{n-1} X_{n-1} (t) X_n (t))  ,
$$
and does the computation to have a Gaussian invariant measure for the Galerkin approximating sequence as done above, necessarily gets $k_n = h_n$:
$$
    \divergenza(b(x)f(x)) = \sum_{i=1} ^n \frac{\partial}{\partial x_i } (b_i(x)f(x) ) =
$$
$$
    c e^{{\frac{-\| x \|^2}{r^2}}}\sum_{i=1} ^n  [(-2 x_i )(k_i x_{i-1} ^2 - k_{i+1} x_i x_{i+1} -h_i x_{i+1} ^2 +h_{i-1} x_{i-1} x_i) -k_{i+1} x_{i+1} + h_{i-1} x_i] = 0.
$$
Since we want the energy conservation along trajectories, we may put 
$$
    h_0 = k_1 = h_{N} = k_{N+1} = 0, 
$$ 
so it holds
$$
    \sum_{i=1}^N (k_i x_i x_{i-1} ^2 - k_{i+1} x_i ^2 x_{i+1} -h_i x_i x_{i+1} ^2 + h_{i-1} x_{i-1} x_i ^2) =0 ,
$$
and we have only to check the following condition
$$
    \sum_{i=1} ^N (-k_{i+1} x_{i+1} + h_{i-1} x_{i-1}) = 0,
$$
hence:
$$
    \sum_{i=1} ^N (-k_{i+1} x_{i+1} + h_{i-1} x_{i-1} ) = h_0 x_0 + h_1 x_1 -k_N x_N -k_{N+1} x_{N+1} + \sum_{i=2} ^{N-1} (-k_i + h_i) x_i.
$$
This leads to $k_i = h_i$ for every $i$ and $k_0 = k_1 =k_N = k_{N+1}=0$. Note that in this way the terms $x_0$ and $x_{N+1}$ disappear from the equations.

\end{section}

\begin{section}{Random initial conditions}\label{sec:randomic}

The first aim of this section is to determine a random environment where the definition of random solution of a deterministic equation makes sense. Then, once we have built the environment we need, we get fundamental estimates on the norm of the random solution for the next section. 
Hence we introduce the following notation:

    Let $(\Omega , \mathcal{F} , P)$ be an abstract probability space, for every $N$ and for 
    $r>0$ let ${Y}^N _r$ be a random variable 
    $$
        {Y}^N _r:(\Omega , \mathcal{F} , P) \rightarrow (\mathbb{R} ^N , \mathcal{B} (\mathbb{R ^N})),
    $$
    with law $\mu_r ^N$.

\begin{definition}
    A set $(\Omega, \mathcal{F},P, U^N_r)$ is said to be a $N$-finite random solution with initial condition 
    ${Y}^N _r$ if $U^N_r$ is defined on the abstract probability space $(\Omega,\mathcal{F},P)\times [0,T]$ 
    to $\mathbb{R}^{\infty}$, all $k$-coordinates of $U^N_r$ are almost surely for each time $t\in[0,T]$ equal 
    to $0$ if $k>N$ and for $k\leq N$ almost surely 
    $$
        {U^N_r} _{(k)} (\omega, t) = {F^N_r} _{(k)} (\omega,t),
    $$
    where, for $\omega\in\Omega$, the function $$F^N _r (\omega) : [0, T] \rightarrow \mathbb {R} ^N$$ is the 
    unique solution of the N-dimensional shell model with initial conditions 
    $$ 
        X(0) = Y^N_r (\omega).
    $$

\end{definition}
Remark that $F^N_r $ is still a random variable from the abstract space $(\Omega, \mathcal{F} , P)$.
 
\begin{prop}
\label{33}
    Let $(\Omega, \mathcal{F},P,U^N_r)$ be a $N$-finite random solution.
    The law of $U^N_r (t) $ is, for any $t\in[0,T]$, 
    $$
        \tilde{\mu}_r^N = \mu_r^N\otimes\bigotimes_{N+1}^{\infty}\delta_0.
    $$
\end{prop}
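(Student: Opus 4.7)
The plan is to decompose $U^N_r(t)$ into its two natural blocks and handle each separately. Write $U^N_r(t) = (V^N_r(t), 0, 0, \ldots)$ where $V^N_r(t) := (U^N_{r,(1)}(t), \ldots, U^N_{r,(N)}(t))$ captures the nontrivial part. By definition, $V^N_r(\omega, t) = \varphi_t(Y^N_r(\omega))$ almost surely, where $\varphi_t$ is the flow of the $N$-dimensional shell model, and the remaining coordinates are almost surely $0$ for every $t$ (this is built into the definition of an $N$-finite random solution). So it suffices to identify the law of $V^N_r(t)$ on $\mathbb{R}^N$ as $\mu_r^N$; the product structure $\tilde{\mu}_r^N$ then follows immediately because the trailing coordinates are deterministic.

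To identify the law of $V^N_r(t)$, I would invoke Proposition~\ref{32}, which states precisely that $\mu_r^N$ is invariant under the flow $\varphi_t$. Concretely, for every bounded measurable $g : \mathbb{R}^N \to \mathbb{R}$,
\[
    \mathbb{E}[g(V^N_r(t))] = \int_\Omega g(\varphi_t(Y^N_r(\omega))) \, dP(\omega) = \int_{\mathbb{R}^N} g(\varphi_t(y)) \, \mu_r^N(dy) = (P_t^\ast \mu_r^N)(g) = \mu_r^N(g),
\]
where the second equality uses that $Y^N_r$ has law $\mu_r^N$ and the last equality is exactly the invariance proved in Proposition~\ref{32}. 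Hence the law of $V^N_r(t)$ is $\mu_r^N$.

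Finally, to conclude that the joint law on $\mathbb{R}^\infty$ is the product $\mu_r^N \otimes \bigotimes_{N+1}^\infty \delta_0$, I would test against cylinder functions $h(x) = g(x_1, \ldots, x_N) \prod_{k=N+1}^M \mathbf{1}_{A_k}(x_k)$. Since $U^N_{r,(k)}(t) = 0$ almost surely for $k > N$, each indicator contributes $\delta_0(A_k)$, and the remaining factor integrates against $\mu_r^N$ by the computation above. Because cylinder functions generate the Borel $\sigma$-algebra on $\mathbb{R}^\infty$, this characterizes the law as $\tilde{\mu}_r^N$. There is no real obstacle here: the only substantive ingredient is the invariance from Proposition~\ref{32}, and the rest is bookkeeping about how the padding with zeros turns into Dirac factors in the product measure.
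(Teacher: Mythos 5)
Your proposal is correct and follows exactly the same route as the paper, which simply states that the result follows from the definition of $U^N_r(t)$ together with the invariance of $\mu_r^N$ proved in Proposition~\ref{32}; you have merely spelled out the bookkeeping (flow representation of the first $N$ coordinates, Dirac factors from the zero padding, cylinder functions) that the paper leaves implicit. No gap.
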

 
\begin{proof}
    It follows directly from the definition of $U_r^N (t)$ and the invariance of $\mu_r^N$ along the trajectories of the $N$-dimensional shell model.
\end{proof}

At this point it should be clear to the reader that the choice of the abstract space $(\Omega , \mathcal{F} , P)$ is totally arbitrary, it matters only the law of the random variables defined on said space. This opens the door to a future use of the Skorokhod representation theorem to get an almost sure limit in place of a weak limit.

Our next issue to solve is to give a suitable norm to the space of random solutions, in order to be able to speak about convergence in that topology. A natural norm to work with could be the $H^s$ norm defined below, since, roughly speaking, we expect any limit of the finite random solution sequence at time $0$ to almost surely have said norm finite for $s<0$, as we will prove in the next section.
 
\begin{definition}
    We define $(H^s, \| \cdot \|_{H^s})$ as the Hilbert space of sequences $x \in \mathbb{R} ^{\infty} $ satisfying 
    $$ 
        \| x\| _{H^s} = \sqrt{ \sum_n k_n ^{2s} x_n ^2 }< \infty .
    $$
\end{definition}

The following proposition on the $H^s$ norms will be significant in the next section to pass from a compactness result in $L^p (0,T; H^s)$ for any $p>1$ to a compactness result in $C(0,T;H^s)$.
 
\begin{prop}
\label{34}
    Let $s_1 < s < s_0 < 0$. Then there exists $\theta \in (0,1)$ such that 
    $$
        \| x \| _{H^s} \leq  \| x \|_{H^{s_0}} ^{\theta} \| x\|_{H^{s_1}} ^{1-\theta},
    $$
    for every $x \in H^{s_0} $.
\end{prop}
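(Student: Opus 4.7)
The plan is to recognize this as a standard interpolation inequality for weighted $\ell^2$ spaces, where the weights $k_n^{2s}$ depend on $s$ in a multiplicative (log-linear) way. The natural exponent $\theta$ is forced by matching powers of $k_n$.

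First I would choose $\theta\in(0,1)$ so that $s=\theta s_0+(1-\theta)s_1$; since $s_1<s<s_0$ the explicit value $\theta=(s-s_1)/(s_0-s_1)$ lies in $(0,1)$. This choice makes $k_n^{2s}=k_n^{2\theta s_0}\,k_n^{2(1-\theta)s_1}$, which is the key algebraic identity.

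Next I would rewrite each summand as a product of two pieces tailored for Hölder's inequality:
\[
k_n^{2s} x_n^2=\bigl(k_n^{2s_0}x_n^2\bigr)^{\theta}\bigl(k_n^{2s_1}x_n^2\bigr)^{1-\theta}.
\]
Summing over $n$ and applying Hölder's inequality with conjugate exponents $p=1/\theta$ and $q=1/(1-\theta)$ gives
\[
\sum_n k_n^{2s}x_n^2\leq\Bigl(\sum_n k_n^{2s_0}x_n^2\Bigr)^{\theta}\Bigl(\sum_n k_n^{2s_1}x_n^2\Bigr)^{1-\theta}=\|x\|_{H^{s_0}}^{2\theta}\,\|x\|_{H^{s_1}}^{2(1-\theta)}.
\]
Taking square roots yields the desired inequality.

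There is essentially no obstacle here: the statement is an immediate consequence of Hölder once the exponent $\theta$ is chosen to balance the weights. The only small point worth noting is that $x\in H^{s_0}$ together with the monotonicity $H^{s_0}\hookrightarrow H^{s_1}$ for $s_0>s_1$ (which holds since $k_n\geq 1$ makes $k_n^{2s_1}\leq k_n^{2s_0}$ for negative exponents with $s_1<s_0<0$, so $\|x\|_{H^{s_1}}\leq\|x\|_{H^{s_0}}$) ensures that both factors on the right-hand side are finite, so the estimate is meaningful.
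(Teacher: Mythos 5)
Your proof is correct and follows essentially the same route as the paper: both split $k_n^{2s}x_n^2$ into two factors and apply H\"older's inequality, arriving at the same exponent $\theta=\frac{s-s_1}{s_0-s_1}$; the only difference is that you fix $\theta$ upfront via $s=\theta s_0+(1-\theta)s_1$, while the paper determines it a posteriori by solving for the H\"older exponents.
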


\begin{proof}
    For all $a\in (2s,0)$, $b\in (0,2)$, it holds
    $$
        \| x\|_{H^s} ^2= \sum_n k_n ^{2s} x_n ^2 = \sum_n k_n ^a |x_n| ^b k_n^{2s -a} |x_n| ^{2-b} \leq
    $$
    we use H\"{o}lder inequality for generic $p,q>1$ such that $\frac{1}{p} + \frac{1}{q} =1$.
    $$ 
        \leq \left(\sum_n (k_n ^a |x_n| ^b )^p \right)^{\frac{1}{p}} \left(\sum_n (k_n ^{2s-a} |x_n| ^{2-b} )^q\right)^{\frac{1}{q}}.
    $$
    We can now let $b=\frac{2}{p}$ in order to have $bp =2$ and $(2-b)q = 2$, that gives $p = \frac{2}{b}$ and $q = \frac{2}{2-b}$, since $\frac{2}{p}\in(0,2)$ and since it holds 
    $$
        \left(\frac{2}{b} \right)^{-1} + \left(\frac{2}{2-b} \right)^{-1} = \frac{b}{2} +\frac{2-b}{2} = 1.
    $$
    Hence we get
    $$
        \| x\|_{H^s} ^2 \leq \left(\sum_n k_n ^{ap} x_n ^2\right) ^{\frac{1}{p}}\left(\sum_n k_n ^{(2s-a)q}x_n^2 \right)^{\frac{1}{q}} .
    $$

    Looking at the statement we want to prove, it remains to show that we can choose $a\in(2s,0)$, $p>1$, $q>1$ such that $ap = 2 s_0$, $(2s -a)q = 2 s_1$ and $\frac{1}{p} + \frac{1}{q}=1$. So solving
    $$
        \begin{cases}
            ap=2s_0 \\
            (2s-a)q=2s_1 \\
            \frac{1}{p}+\frac{1}{q}=1 
        \end{cases}
    $$
    we get 
    $$
        p=\frac{s_0-s_1}{s-s_1},
    $$
    hence $p>1$, hence $q>1$. The last check we have to do is for $a$ and we get $a=\frac{2s_0}{p} \in (2s,0)$ since $s_0 <s$ and $p>1$. So we have proved the statement for 
    \[
        \theta =\frac{1}{p}=\frac{s-s_1 }{s_0 - s_1}.  \qedhere
    \]
\end{proof}

Here, with the specific non-linearity of the system, is where the choice of the measure plays a central role. The following estimates hold for the Gaussian measure $\mu_r ^N$ and a quadratic non-linearity, so a reader that is not strictly focused on shell models can skip this part. Outside of the following two estimates the work is pretty independent of the type of non-linearity of the system, so to generalize the method to something else one must first check the existence of estimates similar to the following ones.

The estimates \ref{36} and \ref{37} guarantee the existence of a family of compact set in the topology of $L^p (0,T; H^s)$ suitable to apply Prohorov theorem, as we will see later in Section 4.
Before stating these estimates, we prove the following lemma, that we will need to establish the propositions.

\begin{lemma}
\label{35}
    Let $k_n$ be the coefficients of the shell model, $l\in \mathbb{N} = \{ 0,1,\dots \}$, $q \in \mathbb{R}$ such that $q+l <0$. Let $\varphi $ be a $\mathcal{C}^{\infty}$ function such that $\varphi (x)\neq 0$ for any $x$. Then
    $$ 
        \sum_n \log \varphi (t k_n ^{2q + 2l})
    $$
    is $h$-times differentiable in $t=0$ for any $h\in\mathbb{N}$ and the derivative operation commutes with the sum, we mean that for any $h$ $\frac{d^h}{dt^h} \sum_n \log \varphi (t k_n ^{2q + 2l}) = \sum_n \frac{d^h}{dt^h} \log \varphi (t k_n ^{2q + 2l})$.  
\end{lemma}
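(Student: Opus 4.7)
The approach is to apply the classical theorem on termwise differentiation of a function series, using the Weierstrass M-test to supply uniform convergence of each derivative series. The only quantitative ingredient is the geometric decay of $k_n^{2q+2l}$, which is really what drives the whole argument.

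Write $\psi := \log \varphi$, which is $C^\infty$ on $\mathbb{R}$ since $\varphi$ is a nowhere-vanishing smooth function, and set $\alpha_n := k_n^{2q+2l}$. For $n\ge 2$ one has $\alpha_n = \lambda^{n(2q+2l)}$, and because $\lambda>1$ and $2q+2l<0$ this decays geometrically to $0$; the indices $n=0,1$ (where $k_n=0$) contribute nothing and may be ignored. The crucial simplification is that the inner map $t\mapsto t\alpha_n$ is affine in $t$, so a one-line induction yields
\[
    \frac{d^h}{dt^h}\psi(t\alpha_n) = \alpha_n^h \, \psi^{(h)}(t\alpha_n), \qquad h\ge 1,
\]
with no Faà di Bruno complications.

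Now fix any $\delta > 0$ and restrict to $t\in (-\delta,\delta)$. For $n\ge 2$, the argument $t\alpha_n$ lies in the compact interval $[-\delta\alpha_2,\delta\alpha_2]$, on which $\psi^{(h)}$ is bounded by a finite constant $M_h$. Hence
\[
    \Bigl|\tfrac{d^h}{dt^h}\psi(t\alpha_n)\Bigr| \le M_h \, \alpha_n^h = M_h \, \lambda^{nh(2q+2l)},
\]
and for every $h\ge 1$ the exponent $h(2q+2l)$ is strictly negative, so the majorant $\sum_n M_h\lambda^{nh(2q+2l)}$ is a convergent geometric series. The Weierstrass M-test then gives uniform convergence on $(-\delta,\delta)$ of the $h$-th derivative series $\sum_n \frac{d^h}{dt^h}\psi(t\alpha_n)$, for each $h\ge 1$.

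Finally invoke the standard fact that, if $\sum g_n$ converges at one point and $\sum g_n'$ converges uniformly on a surrounding interval, then $\sum g_n$ is $C^1$ on the interval with derivative $\sum g_n'$. Assuming the base series $\sum_n \psi(t\alpha_n)$ converges at $t=0$---which holds in the applications of interest because $\varphi(0)=1$ makes every summand vanish at $t=0$---a straightforward induction on $h$, fed with the uniform bound above, produces $h$-fold differentiability on $(-\delta,\delta)$ with derivatives obtained termwise, which specializes at $t=0$ to the claim of the lemma. I do not anticipate a serious obstacle here: the only real subtlety is the pointwise convergence of the base series, and once that is granted the geometric decay of $\alpha_n^h$ makes the rest routine.
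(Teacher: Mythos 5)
Your proof is correct, and it takes a genuinely different (and in one respect more complete) route than the paper's. The paper proceeds by computing the structure of $\frac{d^h}{dt^h}\log\varphi(t)$ explicitly by induction (a homogeneous polynomial $P_h(\varphi,\dots,\varphi^{(h)})$ divided by $\varphi^{2^h}$), combining this with the scaling identity $\frac{d^h}{dt^h}\xi(\lambda t)\big|_{t=0}=\lambda^h\xi^{(h)}(0)$, and then verifying only that the termwise-differentiated series converges at the single point $t=0$, which it treats as sufficient to commute $\frac{d^h}{dt^h}$ with $\sum_n$. You instead exploit the linearity of $t\mapsto t\alpha_n$ to write $\frac{d^h}{dt^h}\psi(t\alpha_n)=\alpha_n^h\psi^{(h)}(t\alpha_n)$ directly, bound $\psi^{(h)}$ on the compact interval $[-\delta\alpha_2,\delta\alpha_2]$, and get locally uniform convergence of each derivative series from the Weierstrass M-test, feeding the classical termwise-differentiation theorem by induction on $h$. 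What your version buys is precisely the justification the paper leaves implicit: convergence of the differentiated series at $t=0$ alone does not by itself license the interchange of derivative and sum, whereas your locally uniform bounds do, and you avoid the combinatorial bookkeeping of the paper's Steps 1--2; the quantitative core (geometric decay of $\alpha_n^h=k_n^{h(2q+2l)}$) is the same in both arguments. You are also right to single out convergence of the undifferentiated series: as stated the lemma implicitly requires $\varphi(0)=1$ (true for the moment generating functions it is applied to), since otherwise the sum already diverges at $t=0$; the paper's proof passes over this, and over the $n=0,1$ terms with $k_n=0$, with the same implicit convention you adopt. A last pedantic point: nonvanishing plus continuity plus $\varphi(0)=1$ gives $\varphi>0$, which is what makes $\log\varphi$ well defined and smooth, so your opening claim is sound in the intended setting.
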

\begin{proof}
    We divide this proof in four steps. 
    \begin{description}

        \item[Step 1] 
            Let $\zeta(t) = \varphi(t)^{h_0} \varphi ' (t) ^{h_1} \ldots \varphi^{(k)} (t)^{h_k}$ be a monomial in the variables $\varphi(t), \varphi '(t), \ldots , \varphi ^{(k)} (t)$ of degree $z =\sum_{i=0} ^{k} h_i$. 
            Then its derivative $\frac{d}{dt} \zeta(t)$ is an homogeneous polynomial of degree $z$ in the variables $\varphi(t), \varphi '(t), \ldots , \varphi ^{(k)} (t), \varphi ^{(k+1)} (t)$.
     
            This follows by a straightforward computation:
            $$
                \frac{d}{dt}( \varphi(t)^{h_0} \varphi ' (t) ^{h_1} \ldots \varphi^{(k)} (t)^{h_k} )
                = \sum_i \sum_{|h_i|} h_i \varphi ^{(i+1)} (t) \varphi ^{(i)}(t)^{h_i -1} \prod_{j\neq i} \varphi^{(j)} (t)^{h_j}.
            $$
     
        \item[Step 2]
            We want to prove by induction the following:
            $$
                \frac{d^{k}}{dt^k} \log \varphi (t) = \frac{P_k (\varphi(t), \varphi '(t), \ldots , \varphi ^{(k)} (t))}{\varphi(t)^{2^k}}
            $$
            where $P_k $ is an homogeneous polynomial of degree $2^k$ .
            
            For $k=1$ we have $\frac{d}{dt}\log \varphi (t) = \frac{\varphi ' (t)}{\varphi (t)}$. Assuming the thesis true for $k$ we make the computation
            \begin{multline*}
                \frac{d^{k+1}}{dt^{k+1}} \log \varphi (t)= \frac{d}{dt}\left(\frac{P_k (\varphi(t), \varphi '(t), \ldots , \varphi ^{(k)} (t))}{\varphi(t)^{2^k}} \right) \\
                = \frac{(\frac{d}{dt}P_k (\varphi(t), \varphi '(t), \ldots , \varphi ^{(k)} (t))) \varphi (t)^{2^k} - 
                P_k (\varphi(t), \varphi '(t), \ldots , \varphi ^{(k)} (t)) (2^k -1) \varphi(t)^{2^k -1} \varphi ' (t) }{\varphi (t)^{2^{k+1}}}
            \end{multline*}
            and using Step 1 we have the thesis.
     
        \item[Step 3]
            For a generic function $\xi $ it holds 
            $$
                \frac{d^k}{dt^k} \xi (\lambda t) = \lambda ^k \xi^{(k)} (\lambda t).
            $$
            Hence, looking for $t=0$, it holds 
            $$ 
                \frac{d^k}{dt^k} \xi (\lambda t)_{|t=0} = \lambda ^k \frac{d^k}{ds^k}\xi (s)_{|s=0}.
            $$
     
        \item[Step 4]
            To conclude the lemma we have to show that for every $h\in \mathbb{N}$,
            $$
                \sum_n  \left(\frac{d^h}{dt^h} \log \varphi (t k_n ^{2q + 2l})\right)\Big|_{t=0}<\infty .
            $$
            Indeed 
            $$ 
                \sum_n  \left(\frac{d^h}{dt^h} \log \varphi (t k_n ^{2q + 2l})\right)\Big|_{t=0} = \sum_n (k_n ^{2q + 2l})^h \frac{P_h (\varphi (s) , \ldots ,\varphi^{(h)} (s))}{\varphi(s)^{2^h}}\Big|_{s=0}
            $$
            and since $\frac{P_h (\varphi (s) , \ldots ,\varphi^{(h)} (s))}{\varphi(s)^{2^h}}\Big|_{s=0}$ is a constant, so not depending on $n$, we have the proof.\qedhere
    \end{description}
\end{proof}

\begin{prop}
\label{36}
    Let $(\Omega, \mathcal{F},P, U^N_r)$ be a $N$-finite random solution.
    For every $s<0$, $r>0$, $p>1$, $\epsilon>0$ there exists a constant $C_{\epsilon} >0$, not depending on $N$, such that 
    $$
        P(\| U^N _r  \|_{L^p(0,T;H^s)} \leq C_{\epsilon}) > 1- \epsilon,
        $$
    for each $N\in \mathbb{N}$.
\end{prop}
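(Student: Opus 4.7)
The plan is to bound $E\bigl[\|U^N_r\|_{L^p(0,T;H^s)}^p\bigr]$ uniformly in $N$ and then invoke Markov's inequality. By Fubini,
\[
    E\bigl[\|U^N_r\|_{L^p(0,T;H^s)}^p\bigr] = \int_0^T E\bigl[\|U^N_r(t)\|_{H^s}^p\bigr]\, dt,
\]
and by Proposition~\ref{33} the law of $U^N_r(t)$ is the time-independent measure $\tilde\mu_r^N$, so the integrand does not depend on $t$. Hence the problem reduces to a single Gaussian moment estimate: bound $\int_{\mathbb{R}^N} \|x\|_{H^s}^p\, d\mu_r^N(x)$ uniformly in $N$.

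Under $\mu_r^N$ the coordinates $x_n$ are independent with law $\mathcal{N}(0,r^2)$, and $\|x\|_{H^s}^2 = \sum_{n=1}^N k_n^{2s} x_n^2$. Since $s<0$ and $k_n=\lambda^n$ with $\lambda>1$, the series $\sum_{n\geq 1} k_n^{2s}$ converges. For $p\geq 2$, Minkowski's inequality in $L^{p/2}(\mu_r^N)$ gives
\[
    \Bigl(\,E\bigl[\|x\|_{H^s}^{p}\bigr]\Bigr)^{2/p} = \Bigl\|\,\textstyle\sum_{n=1}^N k_n^{2s} x_n^2\Bigr\|_{L^{p/2}} \leq \sum_{n=1}^N k_n^{2s}\bigl\|x_n^2\bigr\|_{L^{p/2}} = c_p\, r^2 \sum_{n=1}^N k_n^{2s},
\]
where $c_p = \bigl(E|Z|^{p}\bigr)^{2/p}$ for $Z\sim\mathcal{N}(0,1)$. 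For $1<p<2$, the concavity of $y\mapsto y^{p/2}$ and Jensen's inequality yield the even simpler bound $E[\|x\|_{H^s}^p]\leq \bigl(r^2\sum_n k_n^{2s}\bigr)^{p/2}$. In either case we obtain a constant $K=K(p,r,s,\lambda)$, independent of $N$, such that $E[\|x\|_{H^s}^p]\leq K$.

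Combining the two steps, $E\bigl[\|U^N_r\|_{L^p(0,T;H^s)}^p\bigr]\leq TK$ uniformly in $N$, and Markov's inequality gives
\[
    P\bigl(\|U^N_r\|_{L^p(0,T;H^s)} > C\bigr) \leq \frac{TK}{C^p}.
\]
Choosing any $C_\epsilon$ strictly larger than $(TK/\epsilon)^{1/p}$ yields the claim. There is essentially no hard step: the key is that the invariance of $\mu_r^N$ (Proposition~\ref{32}, inherited by $U^N_r$ through Proposition~\ref{33}) converts a potentially delicate time-dependent estimate into a single Gaussian moment bound, and the geometric growth $k_n=\lambda^n$ together with $s<0$ makes the weighted sum of squared i.i.d.~Gaussians summable. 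One could alternatively route the argument through Lemma~\ref{35} by controlling the cumulant generating function $-\tfrac12\sum_n \log\bigl(1-2tr^2 k_n^{2s}\bigr)$ of $\|x\|_{H^s}^2$ to obtain exponential moments and a Chernoff-type bound, but this is overkill since only finite $p/2$-th moments are required.
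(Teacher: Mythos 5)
Your proof is correct, and the first half is exactly the paper's: Markov's inequality, Fubini, and the stationarity of the law from Proposition~\ref{33} reduce everything to a bound on $E\bigl[\|U^N_r(0)\|_{H^s}^p\bigr]$ uniform in $N$. Where you genuinely diverge is in how that single Gaussian moment is controlled. The paper writes the squared norm as $Z=\sum_n k_n^{2s}r^2W_n$ with $W_n\sim\chi^2(1)$ and shows that the moment generating function $\log\psi(t)=\sum_n\log\varphi(tk_n^{2s}r^2)$ is infinitely differentiable at $t=0$, which is the whole content of the technical Lemma~\ref{35}; finiteness of all $p$-moments is then deduced from smoothness of $\psi$ at the origin. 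You instead bound the $p/2$-th moment of the same weighted sum directly, by the triangle inequality in $L^{p/2}$ for $p\geq 2$ (giving $c_p r^2\sum_n k_n^{2s}$) and by Jensen for $1<p<2$, using only that $\sum_n k_n^{2s}$ is a convergent geometric series when $s<0$ and $\lambda>1$. Your route is more elementary, avoids Lemma~\ref{35} entirely for this proposition, and produces an explicit constant $K(p,r,s,\lambda)$; the paper's MGF machinery is heavier here but is set up as a reusable tool, since the same Lemma~\ref{35} is invoked again for the $W^{1,p}$ estimate of Proposition~\ref{37} (where the summands are fourth powers of Gaussians) and for the tree model in Section~\ref{sec:tree}. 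One shared caveat, not a gap specific to you: with the paper's convention $k_1=0$ the weight $k_1^{2s}$ is ill-defined for $s<0$, so both proofs implicitly use strictly positive weights (as the paper itself does when it writes $\sum r^2\lambda^{(n-1)2s}$ in Section~\ref{sec:compactness}); under that reading your argument goes through verbatim.
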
  
\begin{proof}
    We want to prove that for any $p>1$ and for any $\varepsilon >0$ exists $R\in\mathbb{R^+}$ such that
    $$
        P(\| U_r ^N \|  _{L^p (0,T ; H^s )} > R )<\varepsilon.
    $$
    Hence
    $$ 
        P(\| U_r ^N \|  _{L^p (0,T ; H^s )} > R ) =  P(\| U_r^N \| ^p _{L^p (0,T ; H^s )} > R^p ) \leq 
    $$
    now we apply Markov inequality
    \begin{align*} 
        \leq & \frac{1}{R^p } E[\| U_r^N \| ^p _{L^p (0,T ; H^s )}] \\
        = &\frac{1}{R^p } \| U_r^N \| ^p _{L^p (\Omega \times [0,T] , H^s )} = \frac{1}{R^p} \int^T_0 E[\| U_r^N (t,\omega)\| ^p _{H^s}] ds = 
    \end{align*}
    here, thanks to Proposition \ref{33}, we use the time invariance for the law of $U^N _r $
    $$ 
      =\frac{T}{R^p} E [\| U_r^N  (0, \omega) \| ^p _{H^s}] .
    $$
    Hence it is sufficient to show that for any $p>1$ there exists $C\in\mathbb{R^+}$ such that $$ E [\| U_r^N  (0, \omega) \| ^p _{H^s}]<C,$$
    for any $N$, since the proof will follow letting $R\rightarrow\infty$. Note that for each $N$ holds 
    $$ 
      E[\| U_r ^N (0, \omega )\| ^p _{H^s}]\leq E \left[\left| \left(\sum_{n=1} ^{\infty} k_n ^{2s} r^2 W_n ( \omega) \right)^{\frac{p}{2}} \right| \right] 
    $$
    where $W_i \sim \chi^2(1)$, with $\{W_i\}_i$ iid.
    
    So it is sufficient to prove that the random variable 
    $$ 
        Z =\sum_{n\geq 1} k_n ^{2s} r^2 W_n ,
    $$
    has a moment generating function derivable infinite times in 0, this would imply that it has finite $p$-moment for any $p \geq 1$ and this is would give the uniform bound in $N$ for $L^p (0,T;H^s)$ norm we need.
    
    The moment generating function of $Z$, $\psi(t)$ is 
    $$
        \psi(t) = E[e^{t \sum k_n ^{2s} r^2W_n (\omega)}].
    $$ 
    Notice that 
    $$
        \log E [ e^{t\sum_{n=1} ^m k_n ^{2s} r^2 W_n ( \omega)}  ] = \sum_{n=1}^m \log \varphi (t k_n ^{2s} r^2) 
    $$
    for every $m \in \mathbb{N}$, where $\varphi(t k_n ^{2s} r^2)$ is the moment generating function of $k_n ^{2s} r^2 W_n $. If we define the random variables $Z_m = e^{t\sum_{n=1} ^m k_n ^{2s}r^2 W_n ( \omega)}$ we have that for $t\geq 0$ $Z_m$ is an increasing sequence of random variable, and for $t<0$ it is dominated by $1$.
    So for all $t$ we can have $E[ \lim_{m \rightarrow \infty} Z_m] = \lim_{m\rightarrow \infty} E[Z_m]$, hence
    $$
        \log \psi(t) = \sum_n \log \varphi (t k_n^{2s} r^2).
    $$
    
    We are left to show that $\sum_n \log \varphi (t k_n^{2s} r^2)$ is differentiable infinite times in $t=0$, and this is true for Lemma~\ref{35}.
\end{proof}

\begin{prop}
\label{37}
    Let $(\Omega, \mathcal{F},P, U^N_r)$ be a $N$-finite random solution. 
    For every $s<-1$, $r>0$, $p>1$ $\epsilon>0$ there exists a constant $C_{\epsilon} >0$ such that 
    $$
        P(\| U^N _r  \|_{W^{1,p}(0,T;H^s)} \leq C_{\epsilon}) > 1- \epsilon,
    $$
    for each $N\in \mathbb{N}$.
\end{prop}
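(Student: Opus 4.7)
The plan is to decompose $\|U^N_r\|_{W^{1,p}(0,T;H^s)}^p = \|U^N_r\|_{L^p(0,T;H^s)}^p + \|\frac{d}{dt}U^N_r\|_{L^p(0,T;H^s)}^p$ and handle the two pieces separately. For the first piece Proposition~\ref{36} already supplies the desired uniform tail bound, since $s<-1$ implies in particular $s<0$. Everything therefore reduces to establishing an analogous bound for the time derivative, which I can then combine with Proposition~\ref{36} via a union bound (choosing tolerances $\varepsilon/2$ for each piece and setting $C_\varepsilon$ accordingly).

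To handle the derivative I would first rewrite the right-hand side of the ODE componentwise and use $k_{n\pm1}=\lambda^{\pm1}k_n$ to get, for the components $X_n(t)$ of $U^N_r(t)$,
$$\Bigl|\frac{d}{dt}X_n(t)\Bigr|^2 \leq 4\bigl(k_n^2 X_{n-1}^4 + k_{n+1}^2 X_n^2 X_{n+1}^2 + k_n^2 X_{n+1}^4 + k_{n-1}^2 X_{n-1}^2 X_n^2\bigr).$$
Multiplying by $k_n^{2s}$, summing over $n$, using $X_j^2 X_l^2 \leq \tfrac12(X_j^4+X_l^4)$ on the two cross terms, and absorbing the nearest-neighbour index shifts into multiplicative factors $\lambda^{\pm(2s+2)}$, I would arrive at the clean bound
$$\Bigl\|\frac{d}{dt}U^N_r(t)\Bigr\|_{H^s}^2 \leq C_{\lambda,s}\sum_n k_n^{2s+2}X_n^4.$$
Here the strengthened hypothesis $s<-1$ plays the essential role: it forces $2s+2<0$, so that $\sum_n k_n^{2s+2}=\sum_n \lambda^{(2s+2)n}$ is a convergent geometric series.

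I would then follow the Markov-inequality strategy of Proposition~\ref{36}. By Proposition~\ref{33}, for each fixed $t$ the components $X_n$ are independent $\mathcal{N}(0,r^2)$ random variables for $n\leq N$ (and identically zero for $n>N$), so $X_n^4$ lies in every $L^q(\Omega)$. To bound $E\bigl[(\sum_n k_n^{2s+2}X_n^4)^{p/2}\bigr]$ uniformly in $N$ and $t$ I would split on the size of $p$: for $p\geq 2$, Minkowski's inequality in $L^{p/2}(\Omega)$ applied to the sum of non-negative summands gives $\|\sum_n k_n^{2s+2}X_n^4\|_{L^{p/2}} \leq c_{p,r}\sum_n k_n^{2s+2}$; for $1<p<2$, Jensen's inequality applied to the concave map $x\mapsto x^{p/2}$ reduces matters to the first moment and yields $E\bigl[(\sum_n k_n^{2s+2}X_n^4)^{p/2}\bigr] \leq (3r^4\sum_n k_n^{2s+2})^{p/2}$. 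In either case the bound is a constant $K_{p,r}$ independent of $N$ and $t$; integrating in $t$ and applying Markov's inequality as in Proposition~\ref{36} would then give the desired uniform tail bound for $\|\frac{d}{dt}U^N_r\|_{L^p(0,T;H^s)}$.

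The main obstacle is that the quadratic nonlinearity turns $|\frac{d}{dt}X_n|^2$ into a quartic expression in the $X_j$'s, which precludes the moment-generating-function argument of Proposition~\ref{36}: the random variables $X_n^4$ have no exponential moments whatsoever. The workaround is to abandon the MGF and instead estimate $L^{p/2}(\Omega)$ norms directly via Minkowski or Jensen. This succeeds precisely because the extra factor $k_n^2$ produced by the shell coefficients in the ODE is absorbed by the stronger decay assumption $s<-1$, in place of the weaker $s<0$ that sufficed in Proposition~\ref{36}.
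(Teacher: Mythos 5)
Your proposal is correct, and it follows the paper's proof up to a point: both arguments reduce, via Markov's inequality and the time-invariance of the law (Proposition~\ref{33}), to bounding $E\bigl[\|\tfrac{d}{dt}U^N_r(0)\|_{H^s}^p\bigr]$ uniformly in $N$, and both estimate the quadratic nonlinearity so that everything comes down to the $p/2$-moment of $Z=\sum_n k_n^{2s+2}X_n^4$, with $s<-1$ guaranteeing $\sum_n k_n^{2s+2}<\infty$. The divergence is in how that moment is controlled. The paper mimics Proposition~\ref{36}: it asserts that the moment generating function of $Z$ is infinitely differentiable at $0$ and invokes Lemma~\ref{35}. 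Your objection that $X_n^4$ has no exponential moments is well taken: $E[e^{tZ}]=+\infty$ for every $t>0$, so the MGF is finite only on $(-\infty,0]$ and the hypothesis of Lemma~\ref{35} that $\varphi$ be $\mathcal{C}^{\infty}$ is not literally met; the paper's route can be salvaged by taking one-sided derivatives at $0^-$ (monotone convergence recovers $E[Z^h]$ from the left derivatives of the MGF), but as written this is a genuine weak spot that your argument simply avoids. Your replacement --- Minkowski's inequality in $L^{p/2}(\Omega)$ for $p\geq2$ and Jensen for $1<p<2$ --- is elementary, uses only the marginal fourth (and $2p$-th) moments of the Gaussian components rather than any independence or exponential integrability, and also sidesteps the paper's informal substitution of the coupled components $X_{n-1},X_n,X_{n+1}$ by independent copies $W_{1,n},\dots,W_{6,n}$. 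The only other difference is cosmetic: you treat the zeroth-order part of the $W^{1,p}$ norm explicitly via Proposition~\ref{36} and a union bound, whereas the paper silently identifies the $W^{1,p}$ norm with the $L^p$ norm of the derivative; both are fine in substance.
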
  
\begin{proof}
    Again we have that 
    \begin{align*}
        P(\| U^N_r \| ^p _{W^{1,p} (0,T ; H^{s} )} > R^p ) & \leq \frac{1}{R^p } E[\| U_r^N  \| ^p _{W^{1,p} (0,T ; H^{s} )}] \\ 
        & = \frac{1}{R^p} E[\| \frac{d}{dt} U_r^N \|^p _{L^p (0,T;H^{s})}] .
    \end{align*}
        
    Let $\{W_{1,k},W_{2,k},W_{3,k},W_{4,k},W_{5,k},W_{6,k}\}_{k\in\mathbb{N}}$ be a set of iid Gaussian random variables, with $W_{i,j} \sim \mathcal{N}(0,r^2)$. For each $N$ it holds 
    \begin{multline*}
        E[\| \frac{d}{dt} U_r ^N (0, \omega )\| ^p _{H^s}]\\
         \leq E[(\sum_{n\geq 1} (k_n {W_{1,n}}^2 -k_{n+1} W_{2,n} W_{3,n} - k_n {W_{4,n}} ^2 + k_{n-1} W_{5,n} W_{6,n})^2 k_n ^{2s}    )^{\frac{p}{2}} ] . 
    \end{multline*}

    Moreover, there exists a constant $D>0$ such that 
    \begin{multline*}
        E[(\sum_{n\geq 1} (k_n {W_{1,n}}^2 -k_{n+1} W_{2,n} W_{3,n} - k_n {W_{4,n}} ^2 + k_{n-1} W_{5,n} W_{6,n})^2 k_n ^{2s}    )^{\frac{p}{2}} ] \\
        \leq E[ (D\sum_{n\geq 1} k_n^{2+2s}W_{1,n} ^4)^{\frac{p}{2}} ].
    \end{multline*}
    
    So it is sufficient to prove that the random variable 
    $$ 
        Z = \sum_{n\geq 1} k_n^{2+2s}W_{1,n} ^4  ,
    $$
    has a moment generating function differentiable infinite times in 0, this would imply that it has finite $p$-moment for any $p \geq 1$.

    Using the same argument of Proposition \ref{36} we have that, if $\psi(t)$ is the moment generating function of $Z$, 
    $$
        \log \psi(t) = \sum_n \log \varphi (t k_n^{2+2{s}} r ), 
    $$
    where $\varphi (t k_n^{2+2{s}} r )$ is the moment generating function of $k_n ^{2+2s} {W_{1,n}} ^4$. Moreover, for Lemma \ref{35}, $\sum_n \log \varphi (t k_n^{2+2{s}} r )$ is derivable infinite times in $t=0$. 
\end{proof}

\end{section}

\begin{section}{A compactness result}\label{sec:compactness}

Considering a $N$-finite random solution $(\Omega, \mathcal{F},P, U^N_r)$, we need a compactness criterium for the family of laws  $\{ \mathcal{L} (U_r^N)\}_{N \in \mathbb{N}}$ to extract a converging subsequence $\lim_{K \rightarrow \infty} U_r^{N_k} = U_r ^{\infty}$ in law, 
and then without loss of generality we would have a limit almost surely $\lim_{K \rightarrow \infty} U_r^{N_k} = U_r^{\infty}$ up to changing the abstract space $(\Omega, \mathcal F , P)$ via Skorokhod Theorem.

As anticipated in the previous section, it is natural to extract the limit in the topology of $L^P (0,T;H^s)$ for $s < 0$, since if $s<0$, let $U_r ^{\infty}$ be any limit of $U_r ^N$, we would have $U_r ^{\infty} (0) \in  H^s$ $P$ - almost surely, since for Markov inequality and monotone convergence $$P(\| U_r^{\infty} (0) \|_{H^s} > C) \leq \frac{1}{C}E[\| U_r ^{\infty} (0)\|_{H^s}] = \frac{1}{C} \sum  r^2 \lambda^{(n-1)2s}$$ and the series on the right converges since it is geometric with $\lambda > 1$ and $s < 0$.
 
To prove the convergence of a subsequence we want to use the Prohorov compactness theorem. Thanks to the estimates done on the previous section, we satisfy the condition of the classical Aubin-Lions theorem, that guarantees the existence of proper compact sets, useful for a further application of Prohorov theorem.

Hence we give to the reader the Aubin-Lions theorem in the form we will use.

\begin{theorem}[Aubin-Lions]\label{thm:aubin_lions}
    Let $B_0 \subset  B \subset B_1$ be Banach spaces, $B_0$ and $B_1$ reflexive, with compact embedding of $B_0$ in $B$ and a continuous embedding of $B$ into $B_1$. Let $p \in (1, \infty )$. Let $X$ be the space
    \[
        X = L^p (0,T ; B_0 ) \cap W^{1 , p} (0, T ; B_1 ),
    \]
    endowed with the natural norm. Then the embedding of $X$ in $L^p (0,T ; B)$ is compact.
\end{theorem}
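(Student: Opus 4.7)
The plan is to prove this classical Aubin--Lions theorem along the standard route, combining an Ehrling-type interpolation inequality with a time-equicontinuity estimate coming from the $W^{1,p}(0,T;B_1)$ bound. Concretely, given a bounded sequence $\{u_n\}\subset X$, I want to extract a subsequence that is Cauchy in $L^p(0,T;B)$.

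The first ingredient is the interpolation inequality: for every $\eta>0$ there exists $C_\eta>0$ with $\|v\|_B\leq \eta\|v\|_{B_0}+C_\eta\|v\|_{B_1}$ for all $v\in B_0$. I would prove this by contradiction: failure produces $v_k\in B_0$ with $\|v_k\|_B=1$, $\{v_k\}$ bounded in $B_0$, and $\|v_k\|_{B_1}\to 0$; by compactness of $B_0\hookrightarrow B$ a subsequence converges in $B$ to some $v$, but continuity $B\hookrightarrow B_1$ forces $v=0$, contradicting $\|v_k\|_B=1$. Raising this to the $p$-th power and integrating in $t$ then yields
\[
    \|u\|_{L^p(0,T;B)}^p \leq 2^{p-1}\bigl(\eta^p\|u\|_{L^p(0,T;B_0)}^p+C_\eta^p\|u\|_{L^p(0,T;B_1)}^p\bigr).
\]

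By reflexivity of $L^p(0,T;B_0)$ I then extract a weakly convergent subsequence and, subtracting the weak limit, reduce to $u_n\rightharpoonup 0$ in $L^p(0,T;B_0)$. Writing $u_n(t+h)-u_n(t)=\int_t^{t+h}u_n'(s)\,ds$ in $B_1$ and applying Hölder and Fubini, the bound $\|u_n'\|_{L^p(0,T;B_1)}\leq M$ produces the time-translation estimate
\[
    \|u_n(\cdot+h)-u_n(\cdot)\|_{L^p(0,T-h;B_1)} \leq h\,\|u_n'\|_{L^p(0,T;B_1)} \leq Mh
\]
uniformly in $n$. Combined with the uniform $L^p(0,T;B_0)$ bound and the fact that $B_0\hookrightarrow B\hookrightarrow B_1$ composes to a compact embedding $B_0\hookrightarrow B_1$, a vector-valued Fréchet--Kolmogorov / Simon compactness criterion in Bochner spaces upgrades the weak convergence to a strong convergence $u_n\to 0$ in $L^p(0,T;B_1)$. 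Inserting this into the displayed Ehrling inequality, choosing $\eta$ small against the uniform $L^p(0,T;B_0)$ bound and then letting $n\to\infty$ in the $B_1$ term, gives $u_n\to 0$ in $L^p(0,T;B)$.

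The main obstacle is exactly this middle step, the upgrade from weak convergence in $L^p(0,T;B_0)$ to strong convergence in $L^p(0,T;B_1)$: an integral bound in $B_0$ alone offers no pointwise-in-$t$ control, so one has to genuinely exploit the $W^{1,p}$ regularity in time to produce equicontinuity of the time translates, and then feed this together with the compactness of $B_0\hookrightarrow B_1$ into a Fréchet--Kolmogorov-type criterion for $L^p(0,T;B_1)$. Once this strong $L^p(0,T;B_1)$ convergence is in hand, the Ehrling step makes the final conclusion routine.
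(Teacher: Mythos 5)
The paper does not prove this statement at all: it is quoted as the classical Aubin--Lions theorem, with the proof delegated to the literature (the fractional-in-time variant is attributed to Theorem 2.1 of Flandoli--G\k{a}tarek, and the related $C(0,T;B)$ criterion to Simon). Your proposal supplies the standard proof and it is essentially correct: the Ehrling inequality $\|v\|_B\leq \eta\|v\|_{B_0}+C_\eta\|v\|_{B_1}$ obtained by contradiction from the compact embedding $B_0\hookrightarrow B$ and the continuous embedding $B\hookrightarrow B_1$; the translation estimate $\|u_n(\cdot+h)-u_n\|_{L^p(0,T-h;B_1)}\leq h\|u_n'\|_{L^p(0,T;B_1)}$ from the $W^{1,p}$ bound; and the upgrade to strong convergence in $L^p(0,T;B_1)$ via the vector-valued Fr\'echet--Kolmogorov/Simon criterion, where the uniform $L^p(0,T;B_0)$ bound together with the compactness of the composed embedding $B_0\hookrightarrow B_1$ gives the required spatial compactness (e.g.\ of the averaged values $\int_{t_1}^{t_2}u_n\,dt$), after which uniqueness of the weak limit identifies the strong limit as $0$ and the Ehrling step closes the argument. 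Two small points you should make explicit: when you subtract the weak limit $u$ you need $u\in X$ with the same type of bound, so extract simultaneously a weakly convergent subsequence of the derivatives in $L^p(0,T;B_1)$ (possible since $B_1$ is reflexive and $1<p<\infty$) and use closedness of the weak derivative; and note that weak convergence in $L^p(0,T;B_0)$ transfers to weak convergence in $L^p(0,T;B_1)$ by continuity of the embedding, which is what lets you identify the strong $L^p(0,T;B_1)$ limit. With these remarks your argument is a complete and standard proof, fully consistent with (indeed in the spirit of) the Simon reference the paper already uses.
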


More words can be spent about this theorem, the form of the Aubin-Lions theorem we use is the classical one, but it would work also for some weakened hypothesis: one can be sharper and ask for the space to have a $W^{\alpha,p}$ regularity, with $\alpha \in (0,1)$ instead of $\alpha =1$.
More precisely, we mention this adaptation of Flandoli-Gatarek \cite{N6}.
  
\begin{theorem}
    Let $B_0 \subset B \subset B_1$ be Banach spaces, with $B_0$, $B_1$ reflexive, a compact embedding of $B_0$ into $B$ and a continuous embedding of $B$ into $B_1$. Let $p\in(1,\infty)$ and $\alpha \in (0,1)$ be given. Let $X$ be the space
    $$
        X= L^p (0,T;B_0)\cap W^{\alpha,p} (0,T; B_1)
    $$
    endowed with the natural norm. Then the embedding of $X$ in $L^p (0,T;B)$ is compact.
\end{theorem}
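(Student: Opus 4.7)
The strategy is to reduce the fractional statement to the classical Aubin–Lions Theorem~\ref{thm:aubin_lions} by smoothing in time. Let $(u_n)_n$ be a bounded sequence in $X$. Since $B_0$ and $B_1$ are reflexive and $p\in(1,\infty)$, the spaces $L^p(0,T;B_0)$ and $W^{\alpha,p}(0,T;B_1)$ are reflexive as well, so up to a subsequence $u_n\rightharpoonup u$ weakly in $X$; subtracting $u$ I may assume $u_n\rightharpoonup 0$, and the goal is to upgrade this to strong convergence in $L^p(0,T;B)$.

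\textbf{Ehrling reduction.} The compactness of $B_0\hookrightarrow B$ together with the continuity of $B\hookrightarrow B_1$ yields Ehrling's lemma: for every $\eta>0$ there is $C_\eta>0$ with
$$\|v\|_B\leq\eta\|v\|_{B_0}+C_\eta\|v\|_{B_1},\qquad v\in B_0.$$
Raising to the $p$-th power and integrating in time, the uniform bound of $(u_n)$ in $L^p(0,T;B_0)$ absorbs the first term for any $\eta$, so it suffices to extract a subsequence converging strongly to $0$ in $L^p(0,T;B_1)$.

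\textbf{Time mollification.} Extend each $u_n$ to $\widetilde u_n\in W^{\alpha,p}(\mathbb{R};B_1)\cap L^p(\mathbb{R};B_0)$ via a bounded extension operator, and set $u_n^{\varepsilon}=\rho_{\varepsilon}\ast\widetilde u_n$ for a standard mollifier $\rho_\varepsilon$. Two properties drive the argument. First, for fixed $\varepsilon>0$, $(u_n^\varepsilon)_n$ is bounded in $L^p(0,T;B_0)$ and $\partial_t u_n^\varepsilon=\rho_\varepsilon'\ast\widetilde u_n$ is bounded in $L^p(0,T;B_1)$, so the classical Aubin–Lions theorem gives a subsequence converging strongly in $L^p(0,T;B)$, hence in $L^p(0,T;B_1)$, necessarily to $0$ by the weak convergence. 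Second, the Slobodeckii characterization of $W^{\alpha,p}$ yields the uniform error estimate
$$\|u_n^\varepsilon-u_n\|_{L^p(0,T;B_1)}\leq C\,\varepsilon^\alpha\,[u_n]_{W^{\alpha,p}(0,T;B_1)}\leq C'\varepsilon^\alpha,$$
with the right-hand side vanishing with $\varepsilon$ independently of $n$. A diagonal extraction along a sequence $\varepsilon_k\downarrow 0$ then combines with the triangle inequality to give strong convergence of a subsequence of $(u_n)$ to $0$ in $L^p(0,T;B_1)$, which the Ehrling step upgrades to $L^p(0,T;B)$.

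\textbf{Main obstacle.} The crux is the error bound of order $\varepsilon^\alpha$: it is precisely this gain that allows the fractional time regularity to replace the full time derivative of the classical setting. It follows by a Minkowski/Fubini rearrangement of the mollifier convolution against the Slobodeckii kernel, but some care is needed near $t=0,T$, where one must use a $W^{\alpha,p}$-preserving extension to avoid boundary losses; granting these analytic facts, the argument is then a routine combination of reflexivity, Ehrling, and the classical Aubin–Lions theorem already available in the paper.
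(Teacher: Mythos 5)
Your argument is sound in outline, but note that the paper itself does not prove this statement: it only refers to Theorem 2.1 of Flandoli--Gatarek, so your proposal supplies a proof where the paper gives a citation. Your route --- Ehrling's lemma to reduce strong convergence in $L^p(0,T;B)$ to strong convergence in $L^p(0,T;B_1)$, then time-mollification $u_n^{\varepsilon}=\rho_{\varepsilon}\ast\widetilde u_n$ so that for each fixed $\varepsilon$ the classical Aubin--Lions Theorem~\ref{thm:aubin_lions} applies, combined with the uniform error bound $\|u_n^{\varepsilon}-u_n\|_{L^p(0,T;B_1)}\leq C\varepsilon^{\alpha}[u_n]_{W^{\alpha,p}(0,T;B_1)}$ --- is a standard and correct proof of the fractional version, and it has the advantage of recycling the integer-order theorem already quoted in the paper; the cited proof of Flandoli--Gatarek (following Simon) instead verifies a Kolmogorov/Simon-type compactness criterion in $L^p(0,T;B_1)$ directly from the translation estimate $\|u(\cdot+h)-u\|_{L^p}\leq C|h|^{\alpha}[u]_{W^{\alpha,p}}$ together with the same Ehrling interpolation, so the essential ingredient (the $h^{\alpha}$ gain extracted from the Slobodeckii seminorm) is the same in both approaches. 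To make your sketch complete you should actually prove the two facts you defer: (i) the translation estimate, which follows either from $W^{\alpha,p}=B^{\alpha}_{p,p}\hookrightarrow B^{\alpha}_{p,\infty}$ or, more elementarily, by comparing $u(t+h)$ and $u(t)$ with the average of $u$ over $[t,t+h]$ and applying Jensen's inequality --- an argument that works verbatim for $B_1$-valued functions and immediately yields the $\varepsilon^{\alpha}$ mollification bound; and (ii) the existence of an extension operator bounded simultaneously in $L^p(\mathbb{R};B_0)$ and $W^{\alpha,p}(\mathbb{R};B_1)$ (reflection across $t=0$ and $t=T$ followed by a Lipschitz cutoff suffices). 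Finally, state explicitly that for fixed $\varepsilon$ the mollified sequence is bounded in the full $W^{1,p}(0,T;B_1)$ norm (not only its derivative in $L^p$), and that its unique possible strong limit point is $0$ because mollification composed with extension is a bounded linear, hence weakly continuous, operator; with these details added, the diagonal extraction and triangle inequality close the argument correctly.
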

  
For a proof of this we refer to Theorem 2.1 of \cite{N6}.

In the previous section we have proved the estimates for all $p\in(1,\infty)$. This has done for a future use of a combination of Proposition \ref{34} together with the following result from Simon, in order to extend later a relative compactness in $L^p(0,T;H^s)$ for all $p\in(1,\infty)$ to a relative compactness in $C(0,T; H^s)$.
  
\begin{prop}
\label{39}
    Suppose we have $X \subset B \subset Y$ Banach spaces, with a compact embedding $X \rightarrow Y$. Suppose also there exists a $\theta \in (0,1)$ and a $M$ such that
    $$
        \| v\|_B \leq M \| v\|_X ^{1 -\theta} \| v\|_Y ^{\theta}, 
    $$
    for any $v \in X \cap Y$. Let $F$ be bounded in $L^{p_0} (0,T; X)$ and $\frac{\partial F}{\partial t}$ be bounded in $L^{r_1} (0, T ; Y)$, with $1 \leq p_0 \leq \infty$, $1 \leq r_1 \leq \infty$.
    
    If $\theta (1 - \frac{1}{r_1 }) > \frac{1 -\theta}{p_0 }$ then $F$ is relatively compact in $C(0,T ;B)$.
\end{prop}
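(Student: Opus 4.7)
The plan is to follow Simon's classical compactness argument in $C([0,T]; B)$, with the interpolation hypothesis substituting for a direct compact embedding $X \hookrightarrow B$. First I would note that such an embedding is actually implied by the hypotheses: if $\{v_n\}$ is bounded in $X$, compactness of $X \hookrightarrow Y$ produces a $Y$-convergent subsequence, and the interpolation inequality then forces it to be $B$-Cauchy. Thus $X \hookrightarrow B$ is compact as well, and I can proceed by Ascoli--Arzel\`a on $F \subset C([0,T]; B)$, verifying pointwise-in-$t$ relative compactness in $B$ together with uniform equi-continuity in $B$.

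Next I would establish the two basic pointwise estimates. Since $\partial_t f$ is uniformly bounded in $L^{r_1}(0,T; Y)$, H\"older's inequality yields the uniform $Y$-oscillation control
$$\|f(t) - f(s)\|_Y \leq C \, |t-s|^{1 - 1/r_1}.$$
On the $X$-side, I would introduce a time-averaging $f_\epsilon(t) := \frac{1}{\epsilon}\int_t^{t+\epsilon} f(\tau)\,d\tau$ (with a suitable extension of $f$ near the endpoints). H\"older in time yields the pointwise bound $\|f_\epsilon(t)\|_X \leq \epsilon^{-1/p_0'} \|f\|_{L^{p_0}(0,T;X)}$, while $\|f(t) - f_\epsilon(t)\|_Y \leq C\, \epsilon^{1 - 1/r_1}$ follows from the previous display.

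With these in hand, the core step is to combine them through the interpolation inequality to obtain equi-continuity of $F$ in $B$. Splitting
$$\|f(t) - f(s)\|_B \leq \|f(t) - f_\epsilon(t)\|_B + \|f_\epsilon(t) - f_\epsilon(s)\|_B + \|f_\epsilon(s) - f(s)\|_B,$$
the middle term can be bounded, via the interpolation inequality applied to the mollified function and its uniform $X$-bound, by $C\, \epsilon^{-(1-\theta)/p_0'} |t-s|^{\theta(1 - 1/r_1)}$; the outer two terms are bounded using the interpolation inequality together with the $Y$-closeness of $f$ and $f_\epsilon$. Choosing $\epsilon$ as a small power of $|t-s|$ to balance these competing contributions, the strict inequality $\theta(1 - 1/r_1) > (1-\theta)/p_0$ is exactly the condition ensuring the combined bound vanishes as $|t-s| \to 0$. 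For the pointwise-in-$t$ relative compactness of $\{f(t) : f \in F\}$ in $B$, I would again exploit $f_\epsilon(t)$, whose $X$-boundedness together with the compact embedding $X \hookrightarrow B$ gives relative compactness of $\{f_\epsilon(t)\}_{f \in F}$ in $B$ for each fixed $\epsilon$; passing $\epsilon \to 0$ with uniform control gives relative compactness of $\{f(t)\}$ itself.

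The main obstacle I anticipate is the careful handling of the outer terms $\|f(t) - f_\epsilon(t)\|_B$: since $\|f(t)\|_X$ is not pointwise bounded along $F$, a naive application of interpolation does not work. The fix is to combine the interpolation inequality with a space-interpolation (Ehrling-type) inequality and the $L^{p_0}$ control of $\|f\|_X$ in time, and the strict inequality in the exponent condition is precisely what makes the $\epsilon$-scaling produce a vanishing, rather than merely bounded, error in the Ascoli--Arzel\`a equi-continuity estimate.
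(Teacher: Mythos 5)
The paper does not actually prove this statement: it is quoted verbatim as Corollary~8, Section~10 of Simon's paper \cite{N5}, and the text immediately after it only gives that reference. So there is no internal proof to compare against, and your sketch has to be judged on its own. Your overall strategy --- time-mollification $f_\epsilon$, the two basic bounds $\|f(t)-f(s)\|_Y\le C|t-s|^{1-1/r_1}$ and a negative power of $\epsilon$ on $\|f_\epsilon(t)\|_X$, interpolation to get a uniform H\"older modulus in $B$, and Ascoli--Arzel\`a --- is indeed the right one and is essentially Simon's. Two remarks on the parts you wrote out: the H\"older bound on the average is $\|f_\epsilon(t)\|_X\le \epsilon^{-1/p_0}\|f\|_{L^{p_0}(0,T;X)}$, not $\epsilon^{-1/p_0'}$ (you gain $\epsilon^{1/p_0'}$ from H\"older and lose $\epsilon^{-1}$ from the normalization); this is not cosmetic, since only with the exponent $-(1-\theta)/p_0$ on the middle term does the balancing against $|t-s|^{\theta(1-1/r_1)}$ reproduce exactly the hypothesis $\theta(1-1/r_1)>(1-\theta)/p_0$. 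Your preliminary observation that $X\hookrightarrow B$ is compact, and the total-boundedness argument for pointwise relative compactness of $F(t)$ in $B$, are both fine.

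The genuine gap is exactly where you flag it, but your proposed fix does not close it. An Ehrling-type inequality applied to $v=f(t)-f_\epsilon(t)$ still produces the term $\eta\|f(t)\|_X$, which is not bounded uniformly in $t$ along $F$ when $p_0<\infty$, so no choice of $\eta$ and $\epsilon$ gives a bound on $\sup_t\|f(t)-f_\epsilon(t)\|_B$ that vanishes uniformly over $F$; the $L^{p_0}$-in-time control cannot be converted into pointwise-in-time control this way. The device that actually works (and is in substance what Simon does) is a dyadic telescoping of mollifications: write $f(t)-f_\epsilon(t)=\sum_{k\ge 0}\bigl(f_{\epsilon_{k+1}}(t)-f_{\epsilon_k}(t)\bigr)$ with $\epsilon_k=2^{-k}\epsilon$. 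Each summand is a difference of two averages, so it admits the \emph{uniform} bounds $\|f_{\epsilon_{k+1}}(t)-f_{\epsilon_k}(t)\|_X\le C\epsilon_k^{-1/p_0}$ and $\|f_{\epsilon_{k+1}}(t)-f_{\epsilon_k}(t)\|_Y\le C\epsilon_k^{1-1/r_1}$, hence by interpolation its $B$-norm is at most $C\epsilon_k^{\gamma}$ with $\gamma=\theta(1-1/r_1)-(1-\theta)/p_0>0$; the series is geometric and yields $\sup_t\|f(t)-f_\epsilon(t)\|_B\le C\epsilon^{\gamma}$ uniformly over $F$ (the sum converges to $f(t)$ in $Y$, hence in $B$ once it converges there). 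With this estimate both the outer terms of your decomposition and the $\epsilon\to 0$ step in the pointwise compactness argument go through, and choosing $\epsilon=|t-s|$ gives the uniform $C^{0,\gamma}([0,T];B)$ bound you need. So the architecture of your proof is correct, but the one step you identified as delicate requires this iteration rather than an Ehrling inequality.
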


For a proof of this result we refer to Corollary 8, section 10 of \cite{N5}. 
  
We get finally all the tools and estimates to conclude the section with our main compactness result about the sequence of laws of the random solutions.
 
\begin{theorem}
\label{tightness}
    For fixed $r>0$, let $(\Omega, \mathcal{F},P, U^N_r)$ be a $N$-finite random solution. The family of law $\{ \mathcal{L} (U^N _r ) \}_{N\in \mathbb{N}}$ is tight in $L^P (0, T, H^s )$ for every $p>1$, $s<0$. Moreover, $\{ \mathcal{L} (U^N _r ) \}_{N\in \mathbb{N}}$ is tight in $C(0,T; H^s)$.
\end{theorem}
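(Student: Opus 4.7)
The plan is to obtain each tightness statement by Prohorov's criterion: for every $\varepsilon>0$, produce a compact set in the target function space carrying mass at least $1-\varepsilon$ under all laws $\mathcal{L}(U^N_r)$. The probabilistic input is the uniform-in-$N$ bounds of Propositions \ref{36} and \ref{37}; the compactness comes from Theorem \ref{thm:aubin_lions} for the $L^p(0,T;H^s)$ statement and from Proposition \ref{39} combined with the interpolation of Proposition \ref{34} for the $C(0,T;H^s)$ statement.

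For the first half, I would fix $s<0$ and $p>1$ and select auxiliary exponents with $s_1<-1$ and $s_1<s<s_0<0$. The weighted-$\ell^2$ definition of $H^s$ gives the chain $H^{s_0}\subset H^s\subset H^{s_1}$ with continuous embeddings, and the first one is compact by a standard diagonal-plus-tail argument using $k_n=\lambda^n\to\infty$ and $s-s_0<0$. All three spaces are Hilbert, hence reflexive, so Theorem \ref{thm:aubin_lions} applied with $B_0=H^{s_0}$, $B=H^s$, $B_1=H^{s_1}$ shows that every ball
\[
K_R=\bigl\{v:\|v\|_{L^p(0,T;H^{s_0})}\le R,\ \|v\|_{W^{1,p}(0,T;H^{s_1})}\le R\bigr\}
\]
is relatively compact in $L^p(0,T;H^s)$. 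Given $\varepsilon>0$, Propositions \ref{36} and \ref{37} furnish a common $R=R(\varepsilon)$ such that $P(U^N_r\in K_R)\ge 1-\varepsilon$ for every $N$, which is Prohorov tightness.

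For the $C(0,T;H^s)$ statement I would invoke Proposition \ref{39} with the same spaces $X=H^{s_0}$, $Y=H^{s_1}$, $B=H^s$. The compact embedding $X\hookrightarrow Y$ and the interpolation $\|v\|_{H^s}\le\|v\|_{H^{s_0}}^{\theta_{34}}\|v\|_{H^{s_1}}^{1-\theta_{34}}$ from Proposition \ref{34}, with $\theta_{34}=(s-s_1)/(s_0-s_1)$, fit its hypotheses (Simon's $\theta$ corresponds to $1-\theta_{34}$). Choosing $p_0=r_1=p$ in Propositions \ref{36} and \ref{37}, the Simon condition $\theta(1-1/r_1)>(1-\theta)/p_0$ reduces to
\[
(s_0-s)(p-1) > s-s_1,
\]
which can always be arranged by taking $p$ large enough, since $p>1$ is a free parameter in the two moment estimates. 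Proposition \ref{39} then makes the same $K_R$ relatively compact in $C(0,T;H^s)$, and repeating the probabilistic intersection gives the desired tightness.

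I do not expect a substantive obstacle: the decisive analytic work, namely the moment estimates of Section \ref{sec:randomic}, is already done. The only care required is the joint choice of $s_0$, $s_1$ and $p$ so that the Simon condition, the ordering $s_1<-1<s_0<0$ with $s_1<s<s_0$, and the continuity of all embeddings hold simultaneously; this is routine parameter bookkeeping rather than analysis.
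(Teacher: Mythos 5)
Your proposal is correct and follows essentially the same route as the paper: Aubin--Lions (Theorem \ref{thm:aubin_lions}) plus the uniform bounds of Propositions \ref{36} and \ref{37} for tightness in $L^p(0,T;H^s)$, and Simon's criterion (Proposition \ref{39}) with the interpolation of Proposition \ref{34} for tightness in $C(0,T;H^s)$, exploiting that the moment estimates hold for every $p>1$ to satisfy the condition $\theta(1-1/r_1)>(1-\theta)/p_0$. Your index bookkeeping (taking $B_0=H^{s_0}$ with $s<s_0<0$ so the compactness lands directly in $H^s$) is in fact slightly cleaner than the paper's, which applies Aubin--Lions with $B_0=H^s$ and obtains compactness in $H^{s^*}$ for $s^*<s$.
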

\begin{proof}
    We use Aubin-Lions Theorem~\ref{thm:aubin_lions} with $B_0 = H^s$, $s<0$, $B_1 = H^{s_1 }$, $s_1 < -1$, $B = H^{s^ *}$, $s_1 < s^* < s$ and $p=r$.  
    So the set $$K_{R_1 , R_2} = \{ X | \| X \|_{L^p (0,T ; H^s)} \leq R_1 ,  \| X \|_{W^{1,p} (0,T; H^{s_1})} \leq R_2 \}$$ is relatively compact in $L^p (0,T; H^{s^*})$.
    
    From Propositions \ref{36} and \ref{37}, we have that for every $\varepsilon >0$ there exists a constant $C_{\varepsilon}$ such that :
    \begin{itemize}
        \item $P (\| U_r ^N \|_{L^p (0,T ; H^s)} \leq C_{\varepsilon}) \geq 1- \varepsilon  $ for all $N \in \mathbb{N}$,
        \item $P(\| U_r ^N \|_{W^{1,p} (0,T; H^{s_1})} \leq C_{\varepsilon} ) \geq 1- \varepsilon  $ for all $N \in \mathbb{N}$.
    \end{itemize}

    So, given $\varepsilon >0$ there exist $R_1 (\varepsilon)$, $R_2 (\varepsilon)$ such that the family of laws satisfies 
    $$
        \{ \mathcal{L} (U_r ^N)\} \subset \{ \mu \in Pr (L^p (0,T; H^s )) | \mu ({K^c _{R_1, R_2}}) \leq \varepsilon      \},
    $$
    hence, since $\bar{K} _{R_1, R_2} \supseteq K _{R_1, R_2}$ we have $\bar{K} ^c _{R_1, R_2} \subseteq K ^c _{R_1, R_2}$ and then 
    $$
    \{ \mathcal{L} (U_r^N)\} \subset \{ \mu \in Pr (L^p (0,T; H^s )) |  \mu ({\bar{K}^c _{R_1, R_2}}) \leq \varepsilon      \},
    $$
    so the family of laws $\{ \mathcal{L} (U_r^N)\}_{N \in \mathbb{N}}$ is tight in the topology of $L^p (0, T; H^s)$ for any $p \geq 1$.
   
    For the tightness in $C(0,T ; H^s)$ we can use Proposition \ref{39} with $X= H^{s_0}$, $B = H^s$, $Y = H^{s_1}$ and $\theta$ from Proposition \ref{34}. 
    If we let $p_0 , r_1 \rightarrow \infty$ the condition of Proposition \ref{39} 
    $$
        \theta (1 - \frac{1}{r_1 }) > \frac{1 -\theta}{p_0 }
    $$ 
    is trivial, so we have that, with the same arguments of the $L^p$ case, the family of laws $\{ \mathcal{L} (U_r^N)\}_{N \in \mathbb{N}}$ is tight in the topology of $C (0, T; H^s)$ .
\end{proof}
 
\begin{cor}
\label{compactness}
    For $s<0$, there exists a subsequence $n_k \subset \mathbb{N}$ such that the laws of $\{U^{n_k}_r\} $ converge with the topology of $L^p (0,T; H^s)$ for every $p\geq 1$ and with the topology of $C(0,T; H^s )$.
\end{cor}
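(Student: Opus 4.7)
The proof will be a short consequence of the tightness statement in Theorem~\ref{tightness} combined with Prokhorov's theorem. The plan is to exploit the strongest of the tightness conclusions, namely tightness in $C(0,T;H^s)$, to extract one single subsequence which then automatically converges in every weaker topology under consideration.

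First I would recall that $H^s$ is a separable Hilbert space, so $C(0,T;H^s)$ is a separable Banach space, hence a Polish space; the same is true for $L^p(0,T;H^s)$ for every $p\in[1,\infty)$. On such spaces Prokhorov's theorem applies, so tightness of a family of probability measures is equivalent to sequential relative compactness in the weak topology. Theorem~\ref{tightness} states that $\{\mathcal{L}(U^N_r)\}_{N\in\mathbb{N}}$ is tight in $C(0,T;H^s)$; therefore there exists a subsequence $n_k$ and a probability measure $\nu$ on $C(0,T;H^s)$ such that $\mathcal{L}(U^{n_k}_r)\Rightarrow\nu$ weakly in $C(0,T;H^s)$.

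Second, I would observe that since $[0,T]$ is compact, the inclusion map $C(0,T;H^s)\hookrightarrow L^p(0,T;H^s)$ is continuous for every $p\geq 1$. Consequently, for any bounded continuous function $\Phi$ on $L^p(0,T;H^s)$, the restriction $\Phi|_{C(0,T;H^s)}$ is bounded and continuous on $C(0,T;H^s)$. Hence the weak convergence in $C(0,T;H^s)$ of the same subsequence $\mathcal{L}(U^{n_k}_r)$ transfers, via the push-forward under the continuous embedding, to weak convergence of the induced laws in $L^p(0,T;H^s)$ for every $p\geq 1$. This yields a single subsequence $n_k$ along which the laws converge in both the $C(0,T;H^s)$ topology and the $L^p(0,T;H^s)$ topology for every $p\geq 1$ simultaneously, as required.

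There is no real obstacle here beyond verifying that the ambient spaces are Polish (so that Prokhorov applies in its standard form) and that one can pass weak convergence through the continuous embedding $C(0,T;H^s)\hookrightarrow L^p(0,T;H^s)$; both points are standard. The serious work has already been done in Theorem~\ref{tightness}, which is what the corollary bootstraps from.
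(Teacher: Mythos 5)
Your proof is correct and follows essentially the same route as the paper, which simply applies Prohorov's theorem to the tightness established in Theorem~\ref{tightness}. The only addition is your (correct and harmless) observation that weak convergence in $C(0,T;H^s)$ pushes forward through the continuous embedding into $L^p(0,T;H^s)$, which gives one subsequence working simultaneously for all $p\geq 1$ — a detail the paper leaves implicit.
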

\begin{proof}
    We have only to apply Prohorov Theorem to the results of Theorem \ref{tightness}.
\end{proof}

\end{section}

\begin{section}{Existence of random solution}\label{sec:randsol}

In this section we put all together to have the existence of solutions for almost every initial condition, where almost every is respect to the probability measure $\mu_r$ on the infinite dimensional space of initial conditions. So first we have to establish what is for us a ``random solution'' for the infinite dimensional model, and then prove that the limit extracted in the previous section, after having changed it from a limit in law to a limit almost surely thanks to Skorokhod theorem, effectively fits the definition of random solution.
 
\begin{definition}
    A $(\Omega,\mathcal{F},P,X)$ is said to be a random solution for the infinite shell model if $(\Omega,\mathcal{F},P)$ is an abstract probability space, $X  : (\Omega, \mathcal{F}, P) \times [0, T] \rightarrow \mathbb{R}^{\infty}$ and for almost every $\omega \in \Omega$, $X(\omega, t)$ satisfies for every $i\in \mathbb{N}$ and $t\in[0,T]$
    $$ 
        X_i (\omega, t) = X_i (\omega, 0) +\int_0 ^t k_i X_{i-1} ^2 (\omega, s)ds - \int_0 ^t k_{i+1} X_i (\omega, s) X_{i+1}(\omega, s) ds $$  $$-\int_0 ^t k_i X_{i+1} ^2 (\omega, s)ds +\int_0 ^t k_{i-1}X_{i-1}(\omega, s) X_i (\omega, s) ds.
    $$
\end{definition}

The following theorem represents the goal of the work of the chapter.

\begin{theorem}
    For fixed $r>0$, consider a sequence $\{(\Omega,\mathcal{F},P,U^N_r)\}_{N\geq1}$ of $N$-finite random solutions, up to replace the abstract space $(\Omega, \mathcal{F} , P)$ with another probability space $(\Omega ', \mathcal{F} ', P ')$, there exists a subsequence $n_k \in {\mathbb{N}}$ such that $P'$-almost surely $U_{r} ^{nk}$ converges to a function $U^{\infty} _r$ in $L^p (0,T; H^s)$ for every $p>1$ and in $C(0,T; H^s)$, for every $s<0$. Moreover, $(\Omega ',\mathcal{F} ', P',U^{\infty} _r)$ is a random solution for the infinite shell model.
\end{theorem}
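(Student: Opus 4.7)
The plan is to combine the compactness of Corollary~\ref{compactness} with the Skorokhod representation theorem to produce an almost surely convergent subsequence on a new probability space, and then to identify the limit as a random solution by passing to the limit componentwise in the integral form of the equation. Corollary~\ref{compactness} provides a subsequence $\{n_k\}$ along which $\mathcal{L}(U^{n_k}_r)$ converges weakly both in $L^p(0,T;H^s)$ and in $C(0,T;H^s)$ for every $p>1$ and $s<0$. Both target spaces are separable and metrizable, so the Skorokhod representation theorem yields a probability space $(\Omega',\mathcal{F}',P')$ and random variables $\tilde U^{n_k}_r,\,U^\infty_r$ on it, with $\mathcal{L}(\tilde U^{n_k}_r)=\mathcal{L}(U^{n_k}_r)$, such that $P'$-almost surely $\tilde U^{n_k}_r \to U^\infty_r$ simultaneously in both topologies. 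From here I work only with these tilded variables.

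The key observation is that convergence in $C(0,T;H^s)$ forces, for each fixed $j\in\mathbb{N}$, uniform convergence on $[0,T]$ of the scalar component $\tilde U^{n_k}_{r,j}$ to $U^\infty_{r,j}$: indeed, the pointwise bound $|x_j|\leq k_j^{-s}\|x\|_{H^s}$ holds with $k_j^{-s}$ a finite constant depending only on $j$. Fix now $i\in\mathbb{N}$. For every $n_k\geq i+1$ the law of $\tilde U^{n_k}_r$ matches that of an $n_k$-finite random solution, so $P'$-almost surely one has
\begin{align*}
\tilde U^{n_k}_{r,i}(\omega,t) = \tilde U^{n_k}_{r,i}(\omega,0) &+\int_0^t\Bigl[k_i \tilde U^{n_k}_{r,i-1}(\omega,s)^2 - k_{i+1}\tilde U^{n_k}_{r,i}(\omega,s)\tilde U^{n_k}_{r,i+1}(\omega,s)\\
&\quad -k_i \tilde U^{n_k}_{r,i+1}(\omega,s)^2 + k_{i-1}\tilde U^{n_k}_{r,i-1}(\omega,s)\tilde U^{n_k}_{r,i}(\omega,s)\Bigr]ds.
\end{align*}
Since only the three components with $j\in\{i-1,i,i+1\}$ appear, the uniform convergence on $[0,T]$ of each of them makes each quadratic product converge uniformly in $s$, and the integrals pass to the limit. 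Intersecting countably many $P'$-full events over $i$, and using continuity in $t$ of both sides, produces a single $P'$-full set on which $U^\infty_r$ satisfies the required integral identity for every $i\in\mathbb{N}$ and every $t\in[0,T]$.

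The main obstacle is precisely the passage to the limit in the quadratic nonlinearities. Convergence in the negative-index spaces $L^p(0,T;H^s)$ or $C(0,T;H^s)$ with $s<0$ is by itself far too weak to control products by a direct Sobolev argument, and no global $L^2$ bound on the paths is available, so any attempt to close the equations through a compactness-in-bilinear-form trick is doomed. What rescues the argument is the locality of the shell equation: for each fixed $i$ only the three components $i-1,i,i+1$ enter the right-hand side, and for any such finite collection the $H^s$ topology is equivalent to the pointwise topology up to constants depending only on the indices. Uniform-in-time convergence in $H^s$ therefore transfers immediately to uniform-in-time convergence of the scalar products, and the integral equation closes in the limit.
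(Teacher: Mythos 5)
Your proposal is correct and follows essentially the same route as the paper: compactness plus Prohorov (already packaged in the corollary), Skorokhod representation to upgrade weak convergence to $P'$-a.s. convergence, transfer of the finite-dimensional solution property to the tilded variables through equality of laws (the paper does this via the functional $F_i$ with $E[|F_i(\tilde U^{n_k}_r)|]=E[|F_i(U^{n_k}_r)|]=0$), and then componentwise uniform convergence deduced from $C(0,T;H^s)$ convergence to pass to the limit in the quadratic terms, exactly as in the paper's estimates. The only real difference is presentational: the paper first builds the single metric space $H^{0-}=\bigcap_{s<0}H^s$ so that Skorokhod is applied on one separable metric space covering all $s<0$ at once, whereas you invoke the representation theorem for the two topologies directly; it is worth making that combination explicit, since Skorokhod is stated for weak convergence on a single separable metric space.
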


\begin{proof}
    From Corollary \ref{compactness} we have the existence of a subsequence $\{ n_k\}_{k\in\mathbb{N}} \subset \mathbb{N}$ such that the sequence of laws of the random variables $U_r ^{n_k}$ converges in the topology of $C(0,T;H^s)$ and $L^p (0,T; H^s)$ for every $p\geq 1$.
  
    Since we have the convergence for any $s<0$, we can consider the space $$H^{0-} = \bigcap_{s<0} H^s,$$ endowed with the metric generated by the distance 
    $$
        d(x,\tilde{x}) = \sum_{n=1}^{\infty}2^{-n} (\|x-\tilde{x} \|_{H^{-\frac{1}{n}}} \wedge 1).
    $$ 
    Note that with this metric we have that $x_n \rightarrow ^dx \Leftrightarrow x_n \rightarrow^{H^s} x$ for any $s<0$.
    
    We can assume, using Skorokhod representation Theorem, that almost surely $\tilde{U}_r ^{n_k}$ converges to $U_r ^{\infty}$, up to replace the abstract space $(\Omega, \mathcal{F} , P )$ where $\tilde{U}_r ^N$ are defined with another abstract probability space $(\Omega ', \mathcal{F} ' , P ' )$, in the topology of $C(0,T;H^{0-})$ and $L^p (0,T; H^{0-})$ for every $p\geq 1$.
    The new sequence of random variables $\tilde{U}_r ^{n_k}$ has the same law of ${U}_r ^{n_k}$, this means that for every $\varphi$ measurable function it holds $E[\varphi(\tilde{U}_r ^{n_k})]=E[\varphi({U}_r ^{n_k})]$.
    
    Consequently, considering the operator 
    $$
        F_i(x(\omega,t))=x_i (\omega,t) -x_i (\omega,0) -\int_0 ^t k_i x^2 _{i-1} (\omega, s)ds + \int_0^t k_{i+1} x_i (\omega,s) x_{i+1} (\omega,s)ds$$  $$+\int_0^t k_i x^2_{i+1} (\omega,s)ds - \int_0^t k_{i-1} x_{i-1}(\omega,s)x_i (\omega,s)ds,
    $$
    we have that $E[|F_i (\tilde{U}_r ^{n_k})|]=E[|F_i ({U}_r ^{n_k})|]$, hence $\tilde{U}_r ^{n_k}$ is still almost surely a solution of the finite dimensional shell model. So for now on we consider without loss of generality ${U}^n_r =\tilde{U}^n_r$.
 
    Now let $\{Y^N\}$ and $Y^{\infty}$ such that for each $\varepsilon >0$ there exists a $N_0 $ such that for every $N>N_0$ we have 
    $$
        \sup_{[0,T]} \sum_{n \geq 1} k_n^{2s} (Y^N_n - Y^{\infty} _n) ^2 < \varepsilon,
    $$ 
    this implies that 
    $$
        \sup_{[0,T]} | Y_n^N - Y_n ^{\infty}| < (\frac{\varepsilon}{k_n^{2s}})^{\frac{1}{2}} \dot{=} \varepsilon ' _n .
    $$
    Hence we have:
    \begin{enumerate}
        \item 
            \begin{align*}
                \int_0^t |{Y_n^{N } (s)}^2 - {Y_n^{\infty } (s)}^2  |ds \leq & \int_0^t 2|Y_n ^N (s)| | Y_n^N (s) - Y^{\infty}_n (s)| + |Y^N_n (s) - Y^{\infty} _n (s)|^2 ds\\
                \leq & \int_0^t 2 \sup_{s\in [0,T]}|Y_n ^N (s)| \sup_{s\in[0,T]}| Y_n^N (s) - Y^{\infty}_n (s)| \\
                & + \sup_{s \in [0,T]}|Y^N_n (s) - Y^{\infty} _n (s)|^2 ds \\
                \leq & t(2 \sup_{s\in [0,T]}|Y_n ^N (s)| \varepsilon _n ' + {\varepsilon_n'} ^2).
            \end{align*}
            
            \clearpage
             
        \item   
            \begin{align*}
                \int_0 ^t |Y^N _n (s) Y^N_{n-1} (s) - Y^{\infty} _n (s) Y^{\infty} _{n-1} (s)| ds \leq & \int_0 ^t |Y^N_n (s) - Y^{\infty} _n(s)||Y^N_{n-1} (s) - Y^{\infty}_{n-1} (s)|\\
                &+|Y^N_{n-1} (s) - Y^{\infty} _{n-1} (s)||Y^N_n (s)|\\
                & +  |Y^N_n (s) - Y^{\infty} _n (s)||Y^N_{n-1} (s)| ds\\
                \leq & \int_0 ^t \sup_{s\in[0,T]}|Y^N_n (s) - Y^{\infty} _n(s)| \sup_{s\in[0,T]}|Y^N_{n-1} (s) - Y^{\infty}_{n-1} (s)|\\
                & + \sup_{s\in[0,T]}|Y^N_{n-1} (s) - Y^{\infty} _{n-1} (s)|\sup_{s\in[0,T]}|Y^N_n (s)| \\
                & + \sup_{s\in[0,T]}|Y^N_n (s) - Y^{\infty} _n (s)|\sup_{s\in[0,T]}|Y^N_{n-1} (s)| ds\\
                \leq & t (\varepsilon '_n \varepsilon ' _{n-1} + \sup_{s\in[0,T]} |Y_n ^N (s)| \varepsilon ' _{n-1} + \sup_{s\in[0,T]}|Y^N _{n-1} (s)| \varepsilon ' _n).
            \end{align*}
    \end{enumerate}
  
    Since almost surely $ \sup_{s\in[0,T]}U_n ^N (\omega, s) < \infty$ for every $n,N$, we have that almost surely 
    \begin{multline*}
        \int_0 ^t k_i ({U^{n_k}_{i-1}} ^2 (\omega, s)   -{U^{\infty}_{i-1}} ^2 (\omega, s) )ds -\int_0 ^t k_i ( {U^{n_k}_{i+1}} ^2 (\omega, s) - {U^{\infty}_{i+1}} ^2 (\omega, s) )ds \\ - \int_0 ^t k_{i+1} (U^{n_k}_i (\omega, s) U^{n_k}_{i+1}(\omega, s) - U^{\infty}_i (\omega, s) U^{\infty}_{i+1}(\omega, s))ds\\
         +\int_0 ^t k_{i-1} (U^{n_k}_{i-1}(\omega, s) U^{n_k}_i (\omega, s) - U^{\infty}_{i-1}(\omega, s) U^{\infty}_i (\omega, s) )ds
    \end{multline*}
    goes to $0$ as $k \rightarrow \infty$, so $U^{\infty} _r $ is a random solution for the infinite shell model.
\end{proof}
 
\end{section}

\begin{section}{Invariant measure method on the tree}\label{sec:tree}

In this section we show how to replicate the previous work on a tree model. The tree-like structure have eddies as nodes, and a node is son of another node if the corresponding
eddy is formed by a split of the corresponding eddy of the father.

We denote by $J$ the set of nodes, and if $j \in J$  we call $\mathcal{O} _j$ the set of offspring of $j$. We made assumption that every eddies has the same biggest eddy as ancestor, so we can classify eddies in ``generations'' or ``levels''.
Level 0 is made by only the biggest eddy $\emptyset \in J$, level 1 is made by the eddies produced by the one in level 0 and so on. We will denote the generation of an eddy $j$ by $|j |$. The father of an eddy $j$ will be denoted as $\bar\jmath$. To construct the dynamic we associate at each eddy $j$ a non-negative intensity $X_j (t)$.
The following system is the one we want to explore in this section:
$$
    \frac{d}{dt} X_j = \alpha (c_j X^2 _{\bar\jmath} - \sum_{k\in \mathcal{O}_j} c_k X_j X_k) - \beta (d_{\bar\jmath} X_{\bar\jmath} X_j - \sum_{k\in \mathcal{O}_j} d_j X_k ^2),
$$
where $\alpha,\beta,c_i , d_i \in \mathbb{R}$, $\alpha c_j - \beta  d_j =0$ for any $|j|\geq 1$,  $c_0 = d_{\bar{0}} = d_0 = 0$. 

As usual we consider the truncated version of the infinite dimensional system. So, for $N\in \mathbb{N}$ we define:
$$
    \frac{d}{dt} X_j = \alpha (c_j X^2 _{\bar\jmath} - \sum_{k\in \mathcal{O}_j} c_k X_j X_k) - \beta (d_{\bar\jmath} X_{\bar\jmath} X_j - \sum_{k\in \mathcal{O}_j} d_j X_k ^2),
$$
where $\alpha,\beta,c_i , d_i \in \mathbb{R}$, $\alpha c_j - \beta  d_j =0$ for any $1\leq |j|\leq N-1$,  $c_0 = d_{\bar{0}} = d_0 = 0$ and if $|j| \geq N$, $c_j = d_j = 0$.

For both system (infinite and truncated ones) we ask that there exists an $M\in\mathbb{N}$ such that $\sum_{k\in\mathcal{O}_j} 1 \leq M$ for every $j\in\mathbb{N}$ and there exists a $\lambda>1$ such that
$$
    c_j =\lambda c_{ \bar\jmath} 
$$
for every $j\geq2$ for the infinite system, for every $2\leq j \leq Q$ for the truncated one and $c_j=\lambda$ for $|j|=1$.

For now on we put $Q=\sum_{|k|\leq N} 1$.

\begin{theorem}
    The $Q$-dimensional dynamic system defined above is conservative, in the sense that the kinetic energy $\sum_{|k|\leq N} X_k ^2$ is preserved.
\end{theorem}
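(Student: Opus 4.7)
The plan is to reproduce, on the tree, exactly the energy-conservation computation done earlier in the paper for the linear $N$-dimensional shell model. First I would compute
\[
\frac{d}{dt} \sum_{|j|\le N} X_j^2 \;=\; 2\sum_{|j|\le N} X_j \frac{d}{dt} X_j,
\]
plug in the definition of the dynamics, and split the right-hand side into four monomial sums. Two of them are ``Novikov-type'' (carrying the factor $\alpha$ and coefficients $c_{\bullet}$) and two are ``Obukhov-type'' (carrying the factor $\beta$ and coefficients $d_{\bullet}$). The structural point I would rely on is that each type is separately conservative on the tree, so the compatibility condition $\alpha c_j = \beta d_j$ is \emph{not} needed here; as in the shell model I expect it to play its role only later, when invariance of a Gaussian measure is asked for.

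Concretely, within each of the two types I would pair one single sum over nodes against one double sum over node/child pairs. The two identities to verify are
\[
\sum_{|j|\le N} c_j X_j X_{\bar\jmath}^2 \;=\; \sum_{|j|\le N}\sum_{k\in\mathcal{O}_j} c_k X_j^2 X_k,
\]
\[
\sum_{|j|\le N} d_{\bar\jmath} X_{\bar\jmath} X_j^2 \;=\; \sum_{|j|\le N}\sum_{k\in\mathcal{O}_j} d_j X_j X_k^2.
\]
Each follows from swapping the order of summation on the right-hand side and relabelling by the child index $k$ (with $j=\bar k$): after the swap the integrands agree term by term with the left-hand side.

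The only thing that requires care is the matching of index ranges at the root and at the truncation level $N$. The conventions $c_0 = d_0 = d_{\bar 0} = 0$ kill the contributions that would come from the non-existent parent of the root, while the conventions $c_j = d_j = 0$ for $|j|\ge N$ kill the contributions that would come from children of leaves at level $N$, which lie outside the truncated state space. A short check shows that after these vanishings the surviving ranges on both sides of each identity coincide ($1\le|j|\le N-1$ in the first pair, $2\le|j|\le N$ in the second), so the cancellations are exact. This boundary bookkeeping at the levels $|j|\in\{0,1,N-1,N\}$ is the only point where I would be careful; the rest is literally the same algebraic manipulation already performed for the shell model.
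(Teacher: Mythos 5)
Your proposal is correct and follows essentially the same route as the paper: the paper's ``telescoping'' cancellation (each term appears once as father and once as son) is exactly your swap-and-relabel of the node/child sums, with the boundary contributions at the root and at level $N$ killed by the conventions $c_0=d_0=d_{\bar 0}=0$ and $c_j=d_j=0$ for $|j|\ge N$. Your observation that the compatibility condition $\alpha c_j=\beta d_j$ is not needed here is also consistent with the paper, whose proof cancels the $\alpha$-sums and $\beta$-sums separately.
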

\begin{proof}
    We compute the derivative of the energy:
    $$
        \frac{d}{dt} \left(\sum_{|j| \leq N} X_j ^2\right) = 2 \sum _j \alpha (c_j X_j X_{\bar\jmath}^2 -\sum_{k\in \mathcal{O}_j} c_k X_j ^2 X_k) - \beta (d_{\bar\jmath} X_{\bar\jmath} X_j ^2 -\sum_{k \in \mathcal{O}_j} d_j X_j X_k ^2) =
    $$
    the two sums are telescoping since every term compares once as father and once as son, so
    $$
        = 2(\alpha c_0 X_0 X_{\bar{0}} ^2  -  \sum_{|j|=N, k\in \mathcal{O}_j } \alpha c_k X^2 _j X_k  -\beta d_0 X_0 ^2 X_{\bar{0}} + \sum_{|j|=N, k\in \mathcal{O}_j} \beta d_j X_j X^2 _k) = 0,
    $$
    thus concluding the proof.
\end{proof}

\begin{prop}
    For $N\in \mathbb{N}$ the system:
    $$
        \frac{d}{dt} X_j = \alpha (c_j X^2 _{\bar\jmath} - \sum_{k\in \mathcal{O}_j} c_k X_j X_k) - \beta (d_{\bar\jmath} X_{\bar\jmath} X_j - \sum_{k\in \mathcal{O}_j} d_j X_k ^2),
    $$
    $$ 
        X(0)=X_0
    $$
    where $\alpha,\beta,c_i , d_i \in \mathbb{R}$, $\alpha c_j - \beta  d_j =0$ for any $1\leq |j|\leq N-1$,  $c_0 = d_{\bar{0}} = d_0 = 0$ and if $|j| \geq N$, $c_j = d_j = 0$ admits a solution for any initial condition $X_0$.
\end{prop}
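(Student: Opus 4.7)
The plan is to follow the exact pattern of the analogous finite-dimensional existence proposition established earlier for the shell model (the one proved via Cauchy-Lipschitz plus energy conservation). The cutoff condition $c_j = d_j = 0$ for $|j| \geq N$ makes the right-hand side effectively a map on the finite-dimensional space $\mathbb{R}^Q$ with $Q = \sum_{|k|\leq N} 1$: only the finitely many variables $X_j$ with $|j|\leq N$ actually enter the equations, and all offspring sums $\sum_{k\in\mathcal{O}_j}$ are finite by the uniform bound $\sum_{k\in\mathcal{O}_j} 1 \leq M$.

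First I would observe that the vector field $b:\mathbb{R}^Q\to\mathbb{R}^Q$ defined by the right-hand side is a polynomial of degree two in the coordinates, hence in particular $C^1$ and locally Lipschitz on every bounded set. The classical Cauchy-Lipschitz theorem therefore yields a unique local solution on some interval $[0,\delta)$ for any initial datum $X_0\in\mathbb{R}^Q$.

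To promote this to a global solution on $[0,T]$ I would use the energy conservation proved in the previous theorem: along the maximal solution, $\sum_{|k|\leq N}X_k^2(t)=\sum_{|k|\leq N}X_k^2(0)=\|X_0\|_2^2$. This gives a uniform a priori bound on $\|X(t)\|_2$, so in particular the solution cannot escape to infinity in finite time. By the standard continuation criterion for ODEs with locally Lipschitz right-hand side, a maximal solution that remains bounded can be extended to all of $[0,T]$; hence the local solution extends to a global one.

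No serious obstacle is expected here: the argument is a verbatim tree-version of the shell-model proof, the only subtlety being the verification that the truncation conditions ($c_j=d_j=0$ for $|j|\geq N$, together with $c_0=d_{\bar 0}=d_0=0$) render the system genuinely $Q$-dimensional so Cauchy-Lipschitz applies. Uniqueness, while not asserted in the statement, falls out of the same argument and would be used implicitly in the subsequent construction of the random solution $U_r^N$ on the tree.
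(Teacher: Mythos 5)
Your proposal matches the paper's argument exactly: the paper proves the dyadic analogue via Cauchy--Lipschitz for the quadratic (hence locally Lipschitz) finite-dimensional vector field, then extends the local solution globally using the conserved energy, and for the tree case simply invokes that same argument once energy conservation is established. Your spelled-out version, including the observation that the truncation makes the system genuinely $Q$-dimensional, is a correct and slightly more detailed rendering of the same proof.
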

\begin{proof}
    Since we have the energy conservation, the proof is the same of the one done for the dyadic case.
\end{proof}

So, with same argument of the dyadic case, the infinite dimensional tree model defined in this section admits a solution for any $\ell^2$ initial condition. On the tree model considered in this section we want to improve the existence result from an $\ell^2$ initial condition result to an almost every initial condition, with respect to a Gaussian measure to infinite dimensional space of initial conditions.

\begin{theorem}
    The product Gaussian measure $$\mu _r ^Q = \bigotimes_{|j|\leq N} \mathcal{N} (0, r^2)$$ is invariant for the system.
\end{theorem}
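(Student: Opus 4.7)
The plan is to replicate, almost word for word, the argument of Proposition~\ref{32} for the shell model. By the semigroup reduction of Lemma~\ref{31} it suffices to show that $\frac{d}{dt}(P_t^{\ast}\mu_r^Q)(g)\big|_{t=0}=0$ for every bounded smooth $g:\mathbb{R}^Q\to\mathbb{R}$, which after differentiating under the integral reads
\[
\int_{\mathbb{R}^Q}\nabla g(x)\cdot b(x)\,\mu_r^Q(dx)=0,
\]
where $b_j(x)=\alpha(c_j x_{\bar\jmath}^2-\sum_{k\in\mathcal{O}_j}c_k x_j x_k)-\beta(d_{\bar\jmath}x_{\bar\jmath}x_j-\sum_{k\in\mathcal{O}_j}d_j x_k^2)$ is the tree drift. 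Writing $\mu_r^Q(dx)=f(x)\,dx$ with $f(x)=c\exp(-\|x\|^2/r^2)$ and integrating by parts via Gauss--Green, this amounts to the pointwise identity $\mathrm{div}(b(x)f(x))=0$.

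Next I would expand
\[
\mathrm{div}(bf)=f(x)\Bigl(\sum_{|j|\le N}\partial_{x_j}b_j(x)-\tfrac{2}{r^2}\sum_{|j|\le N} x_j b_j(x)\Bigr),
\]
so that the bracketed expression is the sum of a linear and a cubic polynomial in $x$; since they are algebraically independent, each must vanish separately. The cubic piece $\sum_j x_j b_j=\tfrac{1}{2}\frac{d}{dt}\sum_j X_j^2$ evaluated along the flow is exactly the kinetic energy, and its vanishing is precisely the conservation theorem established just above by the father/son telescoping using $c_0=d_{\bar 0}=d_0=0$ and the cutoffs $c_j=d_j=0$ for $|j|\ge N$. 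For the linear piece, a direct computation gives $\partial_{x_j}b_j=-\alpha\sum_{k\in\mathcal{O}_j}c_k x_k-\beta d_{\bar\jmath}x_{\bar\jmath}$, and summing over $|j|\le N$ one regroups the two double sums around each node $p$ with $1\le|p|\le N-1$: $x_p$ appears once as an offspring of $\bar p$ (contributing $-\alpha c_p x_p$) and in the second sum it appears once for each child in $\mathcal{O}_p$ as a parent (contributing $-\beta d_p x_p$ per child). The constraint $\alpha c_j=\beta d_j$ for $1\le|j|\le N-1$, together with the boundary conventions, should then produce the node-by-node cancellation that closes the argument.

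The main obstacle I anticipate is exactly this combinatorial bookkeeping: unlike the linear-chain shell model, in the tree every internal node $p$ is a parent of $|\mathcal{O}_p|$ distinct children, so the ``telescoping'' between the offspring term $-\alpha\sum_k c_k x_k$ and the parent term $-\beta d_{\bar\jmath}x_{\bar\jmath}$ is no longer literal term-by-term cancellation but must be traced generation by generation. One must therefore carefully track which nodes are summed as ``father indices'' versus ``son indices'', and invoke the bounded branching assumption $|\mathcal{O}_j|\le M$ and the self-similarity $c_j=\lambda c_{\bar\jmath}$ if quantitative bounds on the cancellation are needed. Once the linear condition is verified the conclusion is immediate, exactly as in the shell case: the divergence identity $\mathrm{div}(bf)=0$ yields $\int\nabla g\cdot b\,d\mu_r^Q=0$, hence through Lemma~\ref{31} the stationarity of $\mu_r^Q$ for the truncated tree dynamics.
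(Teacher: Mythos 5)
Your overall strategy coincides with the paper's: reduce via Lemma~\ref{31} and Gauss--Green to the pointwise identity $\mathrm{div}(b(x)f(x))=0$, split the bracket into a cubic part $-\tfrac{2}{r^2}\sum_{|j|\le N} x_j b_j(x)$, which vanishes by the energy-conservation theorem proved just above, and a linear part $\sum_{|j|\le N}\partial_{x_j}b_j(x)$. Up to that point the proposal is sound and identical in spirit to the paper's argument, and your formula $\partial_{x_j}b_j=-\alpha\sum_{k\in\mathcal{O}_j}c_k x_k-\beta d_{\bar\jmath}x_{\bar\jmath}$ is the correct starting point.

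The gap is in the only step that is specific to this theorem: the vanishing of the linear part. You stop at ``the constraint $\alpha c_j=\beta d_j$ \dots should then produce the node-by-node cancellation,'' so the decisive verification is anticipated rather than carried out; and with the contributions exactly as you record them the cancellation does not follow. Regrouping $\sum_{|j|\le N}\bigl(-\alpha\sum_{k\in\mathcal{O}_j}c_kx_k-\beta d_{\bar\jmath}x_{\bar\jmath}\bigr)$ around a node $p$ gives, by your own bookkeeping, $-\alpha c_p x_p$ once (from the equation of its father) and $-\beta d_p x_p$ once \emph{per child}, hence a per-node total $-(\alpha c_p+|\mathcal{O}_p|\,\beta d_p)x_p$: both terms carry the same sign and the branching number $|\mathcal{O}_p|$ enters, so $\alpha c_p=\beta d_p$ alone does not annihilate it. The paper's proof, by contrast, evaluates this sum as boundary terms plus $\sum_{1\le|j|\le N-1}(-\alpha c_j+\beta d_j)X_j$, with no branching factor, and kills it with the standing constraint together with $c_0=d_{\bar 0}=d_0=0$ and $c_j=d_j=0$ for $|j|\ge N$. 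Your computation and the paper's displayed identity therefore do not match, and reconciling them (checking the sign with which the Obukhov term $\beta d_{\bar\jmath}X_{\bar\jmath}X_j$ enters $\partial_{x_j}b_j$, and the multiplicity with which each node occurs as a father) is precisely the missing content of the proof: as written, your argument does not close, since the per-node identity you would need is not implied by the stated hypotheses.
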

\begin{proof}
    Consider the dynamical system in the following form:
    $$
        \dot{X} = b(x).
    $$
    As proved for the diadic model it suffices to have
    $$
        \divergenza (b(x) f(x)) = 0.
    $$ 
    For Gaussian measures $\mu_r ^Q $ we have $f(x) = c e^{\frac{-\| x \| ^2}{r^2}} $ for some constant $c>0$, so 
    $$
        \divergenza (b(x)f(x)) = \sum_{i=1} ^Q \frac{\partial}{\partial X_i } b(x)f(x) _i =
    $$
    $$
        c e^{-\frac{\| X\|^2}{2}} [ \sum _j \alpha (c_j X_j X_{\bar\jmath}^2 -\sum_{k\in \mathcal{O}_j} c_k X_j ^2 X_k) - \beta (d_{\bar\jmath} X_{\bar\jmath} X_j ^2 -\sum_{k \in \mathcal{O}_j} d_j X_j X_k ^2 )]+
    $$
    $$
        + c e^{-\frac{\| X\|^2}{2}} [\sum_{|j|\leq N} -\alpha(\sum_{k\in \mathcal{O}_j} c_k X_k) -\beta d_{\bar\jmath } X_{\bar\jmath}],
    $$
    where the first term of the sum is equal to $0$ as proved in the previous theorem, the second term is equal to 
    $$ 
        c e^{-\frac{\| X\|^2}{2}} ( -\beta d_{\bar{0}} X_{\bar{0}} -\beta d_0 X_0 - \alpha \sum_{|j|=N , c\in \mathcal{O}_j} c_k X_k - \sum_{1\leq |j|\leq N-1} \alpha c_j + \beta d_j) =0.
    $$
\end{proof}

\begin{definition}
    Let $(\Omega , \mathcal{F} , P)$ be an abstract probability space, for every $N$ and for $r>0$ let ${Y}^Q _r$ be a random variable 
    $$
        {Y}^Q _r:(\Omega , \mathcal{F} , P) \rightarrow (\mathbb{R} ^Q , \mathcal{B} (\mathbb{R }^Q)),
    $$
    with law $\mu_r ^Q$.
\end{definition}

\begin{definition}
    A set $(\Omega, \mathcal{F},P, U^Q_r)$ is said to be a $Q$-finite random solution if $U^Q_r$ is defined on the abstract probability space $(\Omega,\mathcal{F},P)\times [0,T]$ to $\mathbb{R}^{\infty}$, all $k$-coordinates of $U^Q_r$ are almost surely for each time $t\in[0,T]$ equal to $0$ if $k>Q$ and for $k\leq Q$ almost surely 
    $$
        {U^Q_r} _{(k)} (\omega, t) = {F^Q_r} _{(k)} (\omega,t),
    $$
    where, for $\omega\in\Omega$, the function 
    $$
        F^Q _r (\omega) : [0, T] \rightarrow \mathbb {R} ^Q
    $$ 
    is the unique solution of the $Q$-dimensional truncated tree model with initial conditions 
    $$ 
        X(0) = Y^Q_r (\omega).
    $$ 
    Remark that $F^Q_r $ is still a random variable from the abstract space $(\Omega, \mathcal{F} , P)$.
\end{definition}

\begin{prop}
\label{433}
    Let $(\Omega, \mathcal{F},P,U^Q_r)$ be a $Q$-finite random solution.
    The law of $U^Q_r (t) $ is, for any $t\in[0,T]$, 
    $$
        \tilde{\mu}_r^Q = \mu_r^Q\otimes\bigotimes_{Q+1}^{\infty}\delta_0.
    $$
\end{prop}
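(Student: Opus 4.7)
The plan is to mirror the argument of Proposition~\ref{33} for the dyadic case, since the statement for the tree model is structurally identical: we have a finite-dimensional deterministic flow acting on a random initial condition whose law is known to be invariant, together with a tail of coordinates that are identically zero.

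First I would unpack the definition of a $Q$-finite random solution. For $k>Q$, the coordinate ${U^Q_r}_{(k)}(\omega,t)$ is almost surely zero for every $t\in[0,T]$, so the marginal law of these coordinates under $U^Q_r(t)$ is $\bigotimes_{Q+1}^{\infty}\delta_0$. Hence it suffices to determine the law of the first $Q$ coordinates, which by definition coincide almost surely with $F^Q_r(\omega,t)=\varphi_t(Y^Q_r(\omega))$, where $\varphi_t$ is the deterministic flow of the $Q$-dimensional truncated tree model.

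Next, I would invoke the invariance of $\mu_r^Q$ proved in the preceding theorem. Since $Y^Q_r$ has law $\mu_r^Q$ on $\mathbb{R}^Q$, the law of $\varphi_t(Y^Q_r)$ is the pushforward $(\varphi_t)_\ast \mu_r^Q = P_t^\ast \mu_r^Q$ in the notation of Lemma~\ref{31}. The invariance result gives $P_t^\ast \mu_r^Q = \mu_r^Q$ for every $t\geq 0$, so the first $Q$ coordinates of $U^Q_r(t)$ are distributed according to $\mu_r^Q$ for each $t\in[0,T]$.

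Finally, since the first $Q$ coordinates and the tail coordinates are almost surely determined separately (the tail being deterministically zero), the joint law factors as the product, and we obtain
\[
    \mathcal{L}(U^Q_r(t)) = \mu_r^Q \otimes \bigotimes_{Q+1}^{\infty}\delta_0 = \tilde{\mu}_r^Q,
\]
as claimed. There is no real obstacle in this argument: all the substantive content (well-posedness of the truncated system and invariance of the Gaussian product measure) has already been established; the proposition is essentially a bookkeeping consequence of those facts combined with the definition of $U^Q_r$.
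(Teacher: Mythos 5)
Your proof is correct and follows the same route as the paper, which simply cites the definition of $U^Q_r(t)$ together with the invariance of $\mu_r^Q$ under the truncated tree dynamics; you have merely spelled out the bookkeeping (zero tail coordinates, pushforward of $\mu_r^Q$ under the flow, product structure) that the paper leaves implicit.
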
 
\begin{proof}
    It follows directly from the definition of $U_r^Q (t)$ and the invariance of $\mu_r^Q$ along the trajectories of the $Q$-dimensional truncated tree model.
 \end{proof}

We equip the space with a similar norm to the one of the previous section.
 
\begin{definition}
    We define $(H^s, \| \cdot \|_{H^s})$ as the Hilbert space of sequences $x \in \mathbb{R} ^{\infty} $ satisfying 
    $$ 
        \| x\| _{H^s} = \sqrt{ \sum_n c_n ^{2s} x_n ^2 }< \infty .
    $$
\end{definition}

\begin{lemma}
\label{435}
    Let $c_n$ be the coefficients of the tree model, $k\in \mathbb{N} = \{ 0,1,\dots \}$, $q \in \mathbb{R}$ such that $q+k <0$. Let $\varphi $ be a $\mathcal{C}^{\infty}$ function. Then
    $$ 
        \sum_n \log \varphi (t c_n ^{2q + 2k})
    $$
    is $h$-times differentiable in $t=0$ for any $h\in\mathbb{N}$ and the derivative operation commutes with the sum, we mean that for any $h$ $\frac{d^h}{dt^h} \sum_n \log \varphi (t c_n ^{2q + 2k}) = \sum_n \frac{d^h}{dt^h} \log \varphi (t c_n ^{2q + 2k})$.  
\end{lemma}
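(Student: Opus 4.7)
The plan is to mirror the four-step proof of Lemma~\ref{35} (the dyadic analogue) since the structural argument is purely about the algebra of derivatives of $\log\varphi$ composed with a scaling, plus a convergence check on a single series. Steps~1--3 of the dyadic proof are independent of the specific coefficients: (i) the derivative of a monomial $\varphi^{h_0}(\varphi')^{h_1}\cdots(\varphi^{(j)})^{h_j}$ of degree $z$ is a homogeneous polynomial of degree $z$ in $\varphi,\varphi',\ldots,\varphi^{(j+1)}$; (ii) an induction on $h$ gives $\tfrac{d^h}{dt^h}\log\varphi(t)=P_h(\varphi,\ldots,\varphi^{(h)})/\varphi^{2^h}$ with $P_h$ homogeneous of degree $2^h$; (iii) the chain rule yields $\tfrac{d^h}{dt^h}\varphi(\alpha t)|_{t=0}=\alpha^h\varphi^{(h)}(0)$. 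All three carry over verbatim with $k_n$ replaced by $c_n$.

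The only genuinely new content is Step~4, namely the justification that termwise differentiation is legitimate at $t=0$, which reduces to establishing
\[
    \sum_{n} \bigl(c_n^{2q+2k}\bigr)^{h} \,\frac{P_h(\varphi(0),\ldots,\varphi^{(h)}(0))}{\varphi(0)^{2^h}}<\infty
\]
for every $h\in\mathbb{N}$. Since the factor involving $\varphi$ is a constant independent of $n$, it suffices to bound $\sum_n c_n^{(2q+2k)h}$. Here I would reorganize the sum over the tree $J$ by generation, using that $c_j=\lambda^{|j|}$ for $|j|\geq 1$ together with the hypothesis $\#\mathcal{O}_j\leq M$ (so the number of nodes at generation $m$ is at most $M^m$). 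This yields
\[
    \sum_{j\in J} c_j^{(2q+2k)h} \;\leq\; \sum_{m\geq 0} M^m\,\lambda^{2m(q+k)h} \;=\; \sum_{m\geq 0}\bigl(M\lambda^{2(q+k)h}\bigr)^{m},
\]
a geometric series which converges provided $M\lambda^{2(q+k)h}<1$. Once this bound is in hand, the interchange of $\sum$ and $\tfrac{d^h}{dt^h}$ at $t=0$ follows by dominated convergence exactly as in Lemma~\ref{35}.

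The main obstacle is precisely this convergence condition: the dyadic argument only needed $q+k<0$ because there is one node per generation, but the tree has exponential growth $M^m$ in the number of nodes. For the geometric series to converge at every $h\geq 1$, one genuinely needs $q+k<-\tfrac{\log M}{2\log\lambda}$ (which is a strengthening of the stated $q+k<0$); in particular, the weakest case $h=1$ is the binding one, and all larger $h$ follow automatically because $\lambda^{2(q+k)h}$ decreases in $h$. I would therefore either sharpen the hypothesis of the lemma accordingly, or invoke the compatibility of the exponents $2q+2k$ with the specific range of $s$ that will be used downstream in the tree analogues of Propositions~\ref{36} and~\ref{37}, ensuring that the relevant $s$ satisfies $2s<-\tfrac{\log M}{\log\lambda}$ (respectively $2+2s<-\tfrac{\log M}{\log\lambda}$). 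Apart from this branching-factor subtlety, the remainder of the proof is a routine adaptation of the dyadic case.
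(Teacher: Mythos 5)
Your proof follows the same route as the paper's: Steps 1--3 are the algebra of derivatives of $\log\varphi$ under scaling, carried over verbatim from Lemma~\ref{35}, and the only real content is the convergence check at $t=0$. Where you deviate is exactly where the paper is weakest. The paper's own proof of Lemma~\ref{435} ends by asserting that $\sum_n (c_n^{2q+2k})^h$ times a constant ``converges for $k+q<0$'', transplanting the dyadic conclusion without accounting for the branching of the tree. Your generation-by-generation estimate
\[
  \sum_{j\in J} c_j^{(2q+2k)h}\;\le\;\sum_{m\ge 0}\bigl(M\,\lambda^{2(q+k)h}\bigr)^m
\]
is the correct computation, and your conclusion --- convergence requires $M\lambda^{2(q+k)h}<1$, i.e.\ $q+k<-\tfrac{\log M}{2\log\lambda}$, with $h=1$ the binding case since $\lambda^{2(q+k)}<1$ makes larger $h$ easier --- identifies a genuine insufficiency in the stated hypothesis: for a tree with branching factor $M\ge 2$ the condition $q+k<0$ alone does not give summability, since the number of nodes in generation $m$ grows like $M^m$ while $c_j^{(2q+2k)h}$ only decays like $\lambda^{2(q+k)hm}$. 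Of your two proposed remedies, restricting the range of $s$ used downstream (taking $s<-\tfrac{\log M}{2\log\lambda}$ in the tree analogue of Proposition~\ref{36} and $1+s<-\tfrac{\log M}{2\log\lambda}$ in that of Proposition~\ref{37}) is the less invasive, since the compactness and limit arguments only ever need \emph{some} sufficiently negative $s$, and the intersection space $H^{0-}$ in the final theorem is unaffected. In short: same approach as the paper, but your Step~4 is the version that actually closes the argument, and the hypothesis of the lemma (or the admissible range of $s$ in its applications) should be adjusted as you indicate.
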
 
\begin{proof}
    First note that 
    $$
        \frac{d^h}{dt^h} \log \varphi(t) = \frac{c \varphi^{(h)} (t) \varphi^{2k-1} (t) + r(t)}{\varphi(t)^{2^{k-1}}},
    $$
    where $r(t)$ is a polynomial where monomial are product of $\varphi^{(j)} (t)^h$ and $\varphi(t)^l$ with $j<h$ and $h+l \leq 2^{h-1}$.
    
    Then note that
    $$ 
        \frac{d^h}{dt^h} \xi(\lambda t) = \lambda^h \xi^{(h)} (\lambda t),
    $$
    so 
    $$ 
        \frac{d^h}{dt^h} \xi(\lambda t)_{|t=0} = \lambda^h \frac{d^h}{ds^h} \xi ( s) _{|s=0}.
    $$

    The lemma is true if for every $h$ we show:
    $$
        \sum_n |\frac{d^h}{dt^h}\log \varphi (t c_n ^{2q + 2k})|_{|t=0} < \infty.
    $$

    Combining what we said above, we have 
    $$
        \sum_n |\frac{d^h}{dt^h}\log \varphi (t c_n ^{2q + 2k})|_{|t=0} = \sum_n (c_n^{2q +2k})^h \frac{c \varphi^{(h)} (s) \varphi^{2h-1} (s) + r(s)}{\varphi(s)^{2^{h-1}}} _{|s=0} ,
    $$
    that converges for $k+q <0$. 
\end{proof}

The following two proposition are the ones imposing such restrictions on the coefficients $c_k$. Until now one can have put $\sum_{i\in\mathcal{O}_j} c_i =\lambda c_j$, having a result on a more general tree. 
Sadly the two following estimates don't work with non-increasing $c_k$ coefficients and we need all $c_k$ increase by a geometric factor from a generation to another.
 
\begin{prop}
\label{436}
    Let $(\Omega, \mathcal{F},P, U^N_r)$ be a $Q$-finite random solution.
    For every $s<0$, $r>0$, $p>1$, $\epsilon>0$ there exists a constant $C_{\epsilon} >0$, not depending on $N$, such that 
    $$
        P(\| U^Q _r  \|_{L^p(0,T;H^s)} \leq C_{\epsilon}) > 1- \epsilon,
    $$
    for each $Q=\sum_{|i|\leq N}1$ with $N\in\mathbb{N}\setminus \{0\}$.
\end{prop}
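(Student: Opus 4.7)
The plan is to replicate the argument of Proposition~\ref{36} in the tree setting. The structural inputs (time invariance of the law along trajectories and a product Gaussian form for the initial measure) are available through Proposition~\ref{433} and the definition of $\mu^Q_r$, so the same chain of reductions goes through with the integer index replaced by a tree index.

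First I would apply Markov's inequality to $\|U^Q_r\|_{L^p(0,T;H^s)}^p$, then use Fubini and Proposition~\ref{433} to rewrite the expectation as $T\cdot E\bigl[\|U^Q_r(0,\omega)\|_{H^s}^p\bigr]$. Since $U^Q_r(0,\omega)$ has law $\mu^Q_r$, its nonzero components are iid $\mathcal{N}(0,r^2)$, so
\[
    \|U^Q_r(0,\omega)\|_{H^s}^2 \;=\; \sum_{|j|\leq N} c_j^{2s}\, r^2 W_j \;\leq\; \sum_{j\in J} c_j^{2s}\, r^2 W_j \;=:\; Z,
\]
with $W_j$ iid $\chi^2(1)$ variables. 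The bounding random variable $Z$ is manifestly independent of $N$, so if I can show $E[Z^{p/2}]<\infty$ for every $p\geq 1$, a uniform constant $C_\epsilon$ drops out by letting $R\to\infty$ in the Markov bound.

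The clean way to control every moment at once is to show that the moment generating function $\psi(t)=E[e^{tZ}]$ is $C^\infty$ at the origin. Using independence and the same monotone/dominated convergence justifications used in Proposition~\ref{36}, I would write
\[
    \log \psi(t) \;=\; \sum_{j\in J} \log \varphi(t\, c_j^{2s}\, r^2),
\]
where $\varphi$ is the MGF of $\chi^2(1)$. Since $s<0$, I can invoke Lemma~\ref{435} with $q=s$ and $k=0$ to commute derivatives with the sum and deduce that $\log \psi$, hence $\psi$, is differentiable infinitely often at $t=0$. That gives $E[Z^{p/2}]<\infty$ for every $p\geq 1$ and finishes the proof.

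The only genuinely new subtlety compared to the shell-model case is the branching of the index set $J$: at level $\ell$ there are up to $M^\ell$ nodes, while the weights grow only geometrically as $c_j=\lambda^{|j|}$. So the convergence of the series arising in each derivative of $\log\psi$ relies on both the geometric ratio $\lambda$ and the finite branching bound $M$. This is the one place I expect friction, and it is precisely the restriction highlighted in the paragraph preceding the statement, which is why the $c_k$ must grow by a geometric factor between generations; this growth is what makes Lemma~\ref{435} applicable in the tree setting. Once that is granted, the rest of the proof transcribes Proposition~\ref{36} line by line, only replacing the linear index $n$ by the tree index $j\in J$.
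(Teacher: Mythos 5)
Your proposal is correct and follows essentially the same route as the paper: Markov's inequality plus the time invariance from Proposition~\ref{433} reduces everything to the $p/2$-moment of $Z=\sum_j c_j^{2s}r^2W_j$, which is controlled by showing $\log\psi(t)=\sum_j\log\varphi(tc_j^{2s}r^2)$ is $C^\infty$ at $t=0$ via Lemma~\ref{435} (with $q=s$, $k=0$), exactly as in the paper. Your closing remark about the branching factor $M$ versus the geometric weights is also the right place to locate the burden; the paper likewise delegates that convergence issue entirely to Lemma~\ref{435}.
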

\begin{proof}
    We want to prove that for any $p>1$ and for any $\varepsilon >0$ exists $R\in\mathbb{R^+}$ such that
    $$
        P(\| U_r ^Q \|  _{L^p (0,T ; H^s )} > R )<\varepsilon.
    $$
    Hence
    $$ 
        P(\| U_r ^Q \|  _{L^p (0,T ; H^s )} > R ) =  P(\| U_r^Q \| ^p _{L^p (0,T ; H^s )} > R^p ) \leq 
    $$ 
    now we apply Markov inequality
    $$ 
        \leq \frac{1}{R^p } E[\| U_r^Q \| ^p _{L^p (0,T ; H^s )}]  = 
    $$ 
    $$ 
        = \frac{1}{R^p } \| U_r^Q \| ^p _{L^p (\Omega \times [0,T] , H^s )} = \frac{1}{R^p} \int^T_0 E[\| U_r^Q (t,\omega)\| ^p _{H^s}] ds = 
    $$
    here, thanks to Proposition \ref{433}, we use the time invariance for the law of $U^Q _r $
    $$ 
        =\frac{T}{R^p} E [\| U_r^Q  (0, \omega) \| ^p _{H^s}] .
    $$
    
    Hence it is sufficient to show that for any $p>1$ exists $C\in\mathbb{R^+}$ such that 
    $$ 
        E [\| U_r^Q  (0, \omega) \| ^p _{H^s}]<C,
    $$
    for any $Q$, since the proof will follow letting $R\rightarrow\infty$. Note that for each $Q$ holds 
    $$ 
        E[\| U_r ^Q (0, \omega )\| ^p _{H^s}]\leq E [| (\sum_{n=1} ^{\infty} c_n ^{2s} r^2 W_n ( \omega) )^{\frac{p}{2}} | ]  
    $$
    where $W_i \sim \chi^2(1)$, with $\{W_i\}_i$ iid.
    
    So it is sufficient to prove that the random variable 
    $$ 
        Z =\sum_{n\geq 1} c_n ^{2s} r^2 W_n ,
    $$
    has a moment generating function derivable infinite times in 0, this would imply that it has finite $p$-moment for any $p \geq 1$ and this is would give the uniform bound in $Q$ for $L^p (0,T;H^s)$ norm we need.

    The moment generating function of $Z$, $\psi(t)$ is 
    $$
        \psi(t) = E[e^{t \sum c_n ^{2s} r^2W_n (\omega)}].
    $$ 

    Note that 
    $$
        \log E [ e^{t\sum_{n=1} ^m c_n ^{2s} r^2 W_n ( \omega)}  ] = \sum_{n=1}^m \log \varphi (t c_n ^{2s} r^2) 
    $$
    for every $m \in \mathbb{N}$, where $\varphi(t c_n ^{2s} r^2)$ is the moment generating function of $c_n ^{2s} r^2 W_n $. If we define the random variables $Z_m = e^{t\sum_{n=1} ^m c_n ^{2s}r^2 W_n ( \omega)}$ we have that for $t\geq 0$ $Z_m$ is an increasing sequence of random variables, and for $t<0$ it is dominated by $1$.
    So for all $t$ we can have $E[ \lim_{m \rightarrow \infty} Z_m] = \lim_{m\rightarrow \infty} E[Z_m]$, hence
    $$
        \log \psi(t) = \sum_n \log \varphi (t c_n^{2s} r^2).
    $$

    There is still to show that $\sum_n \log \varphi (t c_n^{2s} r^2)$ is differentiable infinite times in $t=0$, and this is true for Lemma \ref{435}.
\end{proof}

\begin{prop}
\label{437}
    Let $(\Omega, \mathcal{F},P, U^Q_r)$ be a $Q$-finite random solution.
    For every $s<-1$, $r>0$, $p>1$ $\epsilon>0$ there exists a constant $C_{\epsilon} >0$ such that 
    $$
        P(\| U^Q _r  \|_{W^{1,p}(0,T;H^s)} \leq C_{\epsilon}) > 1- \epsilon,
    $$
    for each $Q=\sum_{|i|\leq N}1$ with $N\in\mathbb{N}\setminus \{0\}$.
\end{prop}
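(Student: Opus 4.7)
The plan is to mirror the argument of Proposition~\ref{37} from the shell-model setting, making adjustments only where the branching structure of the tree requires it. First I would use Markov's inequality exactly as in the shell case to reduce the problem: for any $R>0$,
\[
P(\| U_r^Q\|_{W^{1,p}(0,T;H^s)} > R) \leq \frac{1}{R^p} E\bigl[\|U_r^Q\|_{W^{1,p}(0,T;H^s)}^p\bigr] \leq \frac{T}{R^p} E\bigl[\|\tfrac{d}{dt} U_r^Q(0,\omega)\|_{H^s}^p\bigr],
\]
where in the second step I use the time invariance of the law of $U_r^Q$ granted by Proposition~\ref{433}, exactly as in the shell-model proof. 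It then suffices to bound $E[\|\dot U_r^Q(0,\omega)\|_{H^s}^p]$ uniformly in $Q$.

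Next, I would exploit the tree dynamics to control the derivative. For each node $j$, the equation gives
\[
\dot X_j = \alpha\bigl(c_j X_{\bar\jmath}^2 - \sum_{k\in \mathcal{O}_j} c_k X_j X_k\bigr) - \beta\bigl(d_{\bar\jmath} X_{\bar\jmath} X_j - \sum_{k\in \mathcal{O}_j} d_j X_k^2\bigr),
\]
and by the assumption $c_k = \lambda c_{\bar k}$, together with $|\mathcal{O}_j|\leq M$ and the relation $\alpha c_j = \beta d_j$, every coefficient appearing in $\dot X_j$ is bounded by a constant multiple of $c_j$. A standard Young-type inequality $|XY|\leq \tfrac12(X^2+Y^2)$ applied to each mixed product then yields a constant $D>0$, depending only on $\alpha,\beta,M,\lambda$, such that for each node $j$,
\[
|\dot X_j(0,\omega)|^2 \leq D c_j^2 \bigl(W^{(j)}\bigr)^4,
\]
where $W^{(j)}$ can be taken as a suitable combination of the i.i.d.\ $\mathcal{N}(0,r^2)$ coordinates at $j$, its father and its offspring. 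Summing against the weights $c_j^{2s}$ and applying Jensen,
\[
\|\dot U_r^Q(0,\omega)\|_{H^s}^p \leq \Bigl(D\sum_j c_j^{2+2s}\bigl(W^{(j)}\bigr)^4\Bigr)^{p/2}.
\]

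It remains to show that $Z = \sum_j c_j^{2+2s}\bigl(W^{(j)}\bigr)^4$ has a moment generating function $\psi(t)$ that is infinitely differentiable at $t=0$, so that all $p$-moments are finite uniformly in $Q$. Following the reasoning already used in Proposition~\ref{436}, independence and monotone/dominated convergence give
\[
\log \psi(t) = \sum_j \log \varphi\bigl(t c_j^{2+2s}\bigr),
\]
where $\varphi$ is the m.g.f.\ of $c_j^{-(2+2s)}\cdot c_j^{2+2s}(W^{(j)})^4$. I would then invoke Lemma~\ref{435} with $q=s$ and $l=1$: the hypothesis $q+l<0$ is precisely $s<-1$, which matches the assumption of the proposition.

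The main obstacle, and the only real point where the tree differs from the shell, is verifying that the lemma actually yields a convergent sum over all nodes rather than over a single linear index; this requires that the number of nodes at generation $n$ (at most $M^n$) be compensated by the factor $c_n^{2(2+2s)h}=\lambda^{2(2+2s)nh}$ arising from Step~3 of Lemma~\ref{435}. Since $s<-1$ and $\lambda>1$, one indeed has geometric decay provided $\lambda$ is large enough relative to $M$, which is the reason the authors emphasize that the $c_k$ must grow geometrically across generations. With this verified, the $p$-moment of $Z$ is finite uniformly in $Q$, and sending $R\to\infty$ yields the required constant $C_\epsilon$.
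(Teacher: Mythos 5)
Your overall strategy is the same as the paper's: Markov inequality plus the stationarity of the law from Proposition~\ref{433}, a pathwise bound of $\dot X_j(0,\omega)$ by a constant times $c_j$ times a sum of squares of the Gaussian coordinates of $j$, its father and its offspring, and then reduction to showing that the moment generating function of $Z$ is smooth at $0$ via Lemma~\ref{435} with exponent $2+2s$, which is where $s<-1$ enters. The genuine gap is in the factorization step. As you define it, $W^{(j)}$ is built from the actual coordinates of the neighbourhood of $j$, so the summands $c_j^{2+2s}\bigl(W^{(j)}\bigr)^4$ are \emph{not} independent across $j$: a node shares coordinates with its father, its children and its siblings. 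Hence the identity $\log\psi(t)=\sum_j\log\varphi\bigl(tc_j^{2+2s}\bigr)$, which you invoke ``by independence'' as in Proposition~\ref{436}, fails for your $Z$, and with it the argument that all $p$-moments are finite. The paper's proof is arranged precisely to avoid this: the constant $D$ is used to dominate the neighbourhood sums by a series over a \emph{single} i.i.d.\ family, $Z\le D\sum_n c_n^{2+2s}W_{1,n}^4$, and only after that reduction is the m.g.f.\ split into a product and Lemma~\ref{435} applied. Your argument can be repaired in the same spirit: since each coordinate enters at most $M+2$ neighbourhood sums, and the weights of a node, of its father and of its children differ only by a factor $\lambda^{\pm(2+2s)}$, a regrouping gives the pathwise bound $\sum_j c_j^{2+2s}\bigl(W^{(j)}\bigr)^4\le D'\sum_i c_i^{2+2s}X_i(0,\omega)^4$ with the $X_i(0)$ i.i.d.\ $\mathcal{N}(0,r^2)$, after which the factorization is legitimate. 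Without some such step, the m.g.f.\ computation as you wrote it does not go through.

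Your closing concern about multiplicity is well observed but should not be resolved the way you state it. The relevant series $\sum_j c_j^{(2+2s)h}$ does run over all nodes, so a generation contributes up to $M^n$ terms, and the paper's Lemma~\ref{435} (whose proof only asserts convergence from the sign of the exponent) does not address this; however, your fix ``provided $\lambda$ is large enough relative to $M$'' silently adds a hypothesis that appears neither in the statement of the proposition nor in the standing assumptions ($\lambda>1$, at most $M$ offspring). So flagging the issue is correct, but as written your proof only establishes the proposition under this extra condition on $\lambda$ and $M$ (or for $s$ negative enough), which is a weaker statement than the one claimed.
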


\begin{proof}
    Again we have that 
    $$ 
        P(\| U^Q_r \| ^p _{W^{1,p} (0,T ; H^{s} )} > R^p ) \leq \frac{1}{R^p } E[\| U_r^Q  \| ^p _{W^{1,p} (0,T ; H^{s} )}]  = 
    $$ 
    $$ 
        = \frac{1}{R^p} E[\| \frac{d}{dt} U_r^Q \|^p _{L^p (0,T;H^{s})}] .
    $$
    
    Let $\{W_{1,k},\ldots, W_{2M+2,k}\}_{k\in\mathbb{N}}$ be a set of iid Gaussian random variables, with $W_{i,j} \sim \mathcal{N}(0,r^2)$. For each $N$ holds 
    $$ 
        E[\| \frac{d}{dt} U_r ^Q (0, \omega )\| ^p _{H^s}]\leq
    $$
    $$
        \leq E[(\sum_{n\geq 1} (c_n ({W_{1,n}}^2 +\ldots+{W_{M+2,n}}^2))^2 c_n ^{2s}    )^{\frac{p}{2}} ] . 
    $$
    
    Moreover, there exists a constant $D>0$ such that 
    $$ 
        E[(\sum_{n\geq 1} (c_n ({W_{1,n}}^2 +\ldots+W_{M+2,n}))^2 c_n ^{2s}    )^{\frac{p}{2}} ] \leq
    $$
    $$
        \leq E[ (D\sum_{n\geq 1} c_n^{2+2s}W_{1,n} ^4)^{\frac{p}{2}} ].
    $$
    
    So it is sufficient to prove that the random variable 
    $$ 
        Z = \sum_{n\geq 1} c_n^{2+2s}W_{1,n} ^4  ,
    $$
    has a moment generating function differentiable infinite times in 0, this would imply that it has finite $p$-moment for any $p \geq 1$.

    Using the same argument of Proposition \ref{436} we have that, if $\psi(t)$ is the moment generating function of $Z$, 
    $$
        \log \psi(t) = \sum_n \log \varphi (t c_n^{2+2{s}} r ), 
    $$
    where $\varphi (t c_n^{2+2{s}} r )$ is the moment generating function of $c_n ^{2+2s} {W_{1,n}} ^4$. Moreover, for Lemma \ref{435}, $\sum_n \log \varphi (t c_n^{2+2{s}} r )$ is derivable infinite times in $t=0$. 
\end{proof}

From now on the guideline is the same of the last two sections, having estimates from Propositions \ref{436} and \ref{437} we can extract tightness for the laws of the $Q$-finite random solution and then with Prohorov theorem we can have a converging subsequence of said laws, both in $L^p(0,T;H^s)$ and $C(0,T;H^s)$ topology.
 
\begin{theorem}
\label{tightness2}
    For fixed $r>0$, let $(\Omega, \mathcal{F},P, U^Q_r)$ be a $Q$-finite random solution. The family of law $\{ \mathcal{L} (U^Q _r ) \}$ is tight in $L^P (0, T, H^s )$ for every $p>1$, $s<0$. Moreover, $\{ \mathcal{L} (U^Q _r ) \}$ is tight in $C(0,T; H^s)$.
\end{theorem}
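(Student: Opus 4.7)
The plan is to mimic the proof of Theorem~\ref{tightness} step by step, replacing the role of $k_n$ by $c_n$ throughout and invoking Propositions~\ref{436} and~\ref{437} in place of Propositions~\ref{36} and~\ref{37}. First I would apply the Aubin-Lions Theorem~\ref{thm:aubin_lions} with $B_0 = H^s$ for some $s<0$, $B_1 = H^{s_1}$ with $s_1 < -1$, and $B = H^{s^\ast}$ with $s_1 < s^\ast < s$. Because $B_0 \subset B$ compactly (the coefficients $c_n$ grow geometrically, so the embedding factors through a loss of weight in a summable way) and $B \subset B_1$ continuously, the set
\[
    K_{R_1,R_2} = \bigl\{\, X \colon \|X\|_{L^p(0,T;H^s)} \leq R_1,\; \|X\|_{W^{1,p}(0,T;H^{s_1})} \leq R_2 \,\bigr\}
\]
is relatively compact in $L^p(0,T;H^{s^\ast})$.

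Next I would combine this with the probabilistic bounds: given $\varepsilon > 0$, Propositions~\ref{436} and~\ref{437} produce a constant $C_\varepsilon$ such that $\|U^Q_r\|_{L^p(0,T;H^s)} \leq C_\varepsilon$ and $\|U^Q_r\|_{W^{1,p}(0,T;H^{s_1})} \leq C_\varepsilon$ each hold with probability at least $1-\varepsilon/2$ uniformly in $Q$. Intersecting these two events, we obtain $R_1(\varepsilon), R_2(\varepsilon)$ such that $\mathcal{L}(U^Q_r)(K_{R_1,R_2}) \geq 1-\varepsilon$ for every $Q$, and passing to the closure $\overline{K}_{R_1,R_2}$ (which is compact in $L^p(0,T;H^{s^\ast})$) this proves tightness of $\{\mathcal{L}(U^Q_r)\}$ in $L^p(0,T;H^{s^\ast})$. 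Since $s^\ast$ was an arbitrary value strictly less than $0$, relabelling gives tightness in $L^p(0,T;H^s)$ for all $s<0$ and $p>1$.

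For the stronger tightness in $C(0,T;H^s)$ I would invoke Proposition~\ref{39} with the spaces $X = H^{s_0}$, $B = H^s$, $Y = H^{s_1}$ where $s_1 < s < s_0 < 0$. The interpolation inequality $\|x\|_{H^s} \leq \|x\|_{H^{s_0}}^{\theta}\|x\|_{H^{s_1}}^{1-\theta}$ needed as input to Proposition~\ref{39} is proved exactly as in Proposition~\ref{34}: the argument uses only Hölder's inequality on the weighted series defining the $H^s$-norm and is therefore insensitive to whether the weights are $k_n$ or $c_n$. Taking $p_0 = r_1 \to \infty$ in Proposition~\ref{39} makes the inequality $\theta(1-1/r_1) > (1-\theta)/p_0$ trivially true. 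Combined with the uniform-in-$Q$ bounds from Propositions~\ref{436} and~\ref{437} (used now with $p$ arbitrarily large), the same tightness argument as in the $L^p$ case yields tightness in $C(0,T;H^s)$.

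The only point that is not a pure transcription of the shell-model proof is the verification that the compact embedding $H^{s_0} \hookrightarrow H^s$ holds in the tree setting; this however is immediate because $c_n/c_{\bar\jmath} = \lambda > 1$ for $|j|\geq 2$ so the weights $c_n^{2s}$ decay geometrically across generations (and the branching factor is bounded by $M$), making the tail of the weighted $\ell^2$-series arbitrarily small uniformly on bounded sets of $H^{s_0}$. Hence no genuinely new estimate is required, and the main obstacle -- ensuring that the interpolation and compact-embedding machinery survives the move from a linear shell to a tree with bounded branching -- is handled by these observations.
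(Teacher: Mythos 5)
Your proposal is correct and follows exactly the route the paper intends: the paper's own proof of this theorem simply states that, given Propositions~\ref{436} and~\ref{437}, one repeats the argument of the dyadic case (Theorem~\ref{tightness}), which is precisely what you carry out, including the Aubin--Lions step, the passage to closed compact sets, and the Simon/interpolation argument for $C(0,T;H^s)$. Your extra remark verifying the compact embedding $H^{s_0}\hookrightarrow H^{s}$ in the tree setting is a sound (and welcome) detail that the paper leaves implicit.
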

\begin{proof}
    
    Having the estimates given by Propositions \ref{436} and \ref{437}, the proof is straightforward using the same arguments of the dyadic case.
\end{proof}

\begin{cor}
\label{compactness2}
    For $s<0$, there exists a subsequence $n_k \subset \mathbb{N}$ such that the laws of $\{U^{n_k}_r\} $ converge with the topology of $L^p (0,T; H^s)$ for every $p\geq 1$ and with the topology of $C(0,T; H^s )$.
\end{cor}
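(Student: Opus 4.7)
The proof is a straightforward application of Prohorov's theorem to the tightness result of Theorem~\ref{tightness2}, following exactly the strategy used for Corollary~\ref{compactness} in the dyadic case. The plan is as follows.

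First I would fix a pair $(p,s)$ with $p\geq 1$ and $s<0$ and observe that, since $L^p(0,T;H^s)$ and $C(0,T;H^s)$ are Polish spaces, Prohorov's theorem applies: tightness of $\{\mathcal{L}(U_r^Q)\}$ in each of these topologies, which is precisely the content of Theorem~\ref{tightness2}, yields relative compactness of the family of laws in the weak topology of probability measures. Hence for this fixed $(p,s)$ I can extract a subsequence $\{n_k\}$ along which $\mathcal{L}(U_r^{n_k})$ converges weakly in both $L^p(0,T;H^s)$ and $C(0,T;H^s)$.

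To obtain a single subsequence working for every $p\geq 1$ and every $s<0$ simultaneously, I would apply a standard diagonal extraction. First pick a countable dense family, for example $p_j=j$ and $s_j=-1/j$ for $j\in\mathbb{N}$, and inductively extract a nested chain of subsequences, each refining the previous one, so that along the $j$-th subsequence the laws converge in both $L^{p_j}(0,T;H^{s_j})$ and $C(0,T;H^{s_j})$. The diagonal subsequence $n_k$ then produces weakly convergent laws in each $L^{p_j}(0,T;H^{s_j})$ and $C(0,T;H^{s_j})$. Since convergence in $L^p(0,T;H^s)$ for arbitrary $p\geq 1$ and $s<0$ follows from convergence along the chosen dense family by a monotonicity argument on exponents (increasing $p$ and decreasing $s$ yields continuous embeddings, and any pair $(p,s)$ is dominated by some $(p_j,s_j)$), the extracted subsequence works uniformly.

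There is no serious obstacle here: the analytical work has already been done in Propositions~\ref{436} and~\ref{437} and packaged into Theorem~\ref{tightness2}, so the corollary reduces to citing Prohorov and performing the diagonal argument. The only minor point of care is ensuring that the same subsequence handles both the $L^p$ and $C$ topologies at once, but this is automatic because the tightness statements for both topologies are proved together, so a single Prohorov extraction at each step of the diagonal process suffices.
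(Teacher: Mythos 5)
Your proposal is correct and follows essentially the same route as the paper, which proves the corollary in one line by applying Prohorov's theorem to the tightness established in Theorem~\ref{tightness2}. The diagonal extraction over a countable family of pairs $(p_j,s_j)$ that you spell out is a detail the paper leaves implicit, but it is the natural way to make the ``for every $p\geq 1$ and $s<0$'' claim precise, so there is no substantive difference in approach.
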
 
\begin{proof}
    We have only to apply Prohorov Theorem to the results of Theorem \ref{tightness2}.
\end{proof}

\begin{definition}
    A set $(\Omega,\mathcal{F},P,X)$ is said to be a random solution for the tree model if $(\Omega,\mathcal{F},P)$ is an abstract probability space, $X  : (\Omega, \mathcal{F}, P) \times [0, T] \rightarrow \mathbb{R}^{\infty}$ and for almost every $\omega \in \Omega$, $X(\omega, t)$ satisfies for every $i\in \mathbb{N}$ and $t\in[0,T]$
    $$ 
        X_j (\omega, t) = X_j (\omega,0) + \alpha \int_0^t  (c_j X^2 _{\bar\jmath} (\omega,s) - \sum_{k\in \mathcal{O}_j} c_k X_j (\omega,s) X_k(\omega,s))ds - 
    $$ 
    $$
        \beta\int_0^t (d_{\bar\jmath} X_{\bar\jmath} (\omega,s) X_j (\omega,s) - \sum_{k\in \mathcal{O}_j} d_j X_k ^2(\omega,s)) ds.
    $$
\end{definition}

\begin{theorem}
    For fixed $r>0$, consider a sequence $\{(\Omega,\mathcal{F},P,U^Q_r)\}$ of $Q$-finite random solutions, up to replace the abstract space $(\Omega, \mathcal{F} , P)$ with another probability space $(\Omega ', \mathcal{F} ', P ')$, there exists a subsequence $n_k \in {\mathbb{N}}$ such that $P'$-almost surely $U_{r} ^{nk}$ converges to a function $U^{\infty} _r$ in $L^p (0,T; H^s)$ for every $p>1$ and in $C(0,T; H^s)$, for every $s<0$. Moreover, $(\Omega ',\mathcal{F} ', P',U^{\infty} _r)$ is a random solution for the tree model.
\end{theorem}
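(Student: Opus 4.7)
The plan is to follow the same road map used in Section~\ref{sec:randsol} for the dyadic case, replacing Propositions~\ref{36}--\ref{37} with their tree analogues \ref{436}--\ref{437} and using Corollary~\ref{compactness2} as the compactness input. Concretely, I would first invoke Corollary~\ref{compactness2} to extract a subsequence $\{n_k\}$ along which the laws $\mathcal{L}(U^{n_k}_r)$ converge weakly in both $L^p(0,T;H^s)$ and $C(0,T;H^s)$ for every $s<0$ and every $p>1$. Because the convergence holds for \emph{every} $s<0$, I would next introduce, exactly as in Section~\ref{sec:randsol}, the Fréchet space $H^{0-} = \bigcap_{s<0} H^s$ with the metric
\[
d(x,\tilde{x}) = \sum_{n=1}^{\infty} 2^{-n}\bigl(\|x-\tilde{x}\|_{H^{-1/n}} \wedge 1\bigr),
\]
so that $d$-convergence is equivalent to convergence in $H^s$ for every $s<0$.

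Then I would apply the Skorokhod representation theorem on the Polish space $C(0,T;H^{0-}) \cap L^p(0,T;H^{0-})$ to obtain a new abstract space $(\Omega',\mathcal{F}',P')$ and random variables $\tilde{U}^{n_k}_r$ with the same laws as $U^{n_k}_r$, such that $\tilde{U}^{n_k}_r \to U^\infty_r$ in the above topologies $P'$-almost surely. Exactly as in the dyadic proof, since the operator
\[
F_j(x)(\omega,t) = x_j(\omega,t) - x_j(\omega,0) - \alpha\!\int_0^t\!\!\bigl(c_j x_{\bar\jmath}^2 - \!\!\sum_{k\in\mathcal{O}_j}\! c_k x_j x_k\bigr)ds + \beta\!\int_0^t\!\!\bigl(d_{\bar\jmath} x_{\bar\jmath} x_j - \!\!\sum_{k\in\mathcal{O}_j}\! d_j x_k^2\bigr)ds
\]
is Borel measurable, $E[|F_j(\tilde U^{n_k}_r)|] = E[|F_j(U^{n_k}_r)|] = 0$, so $\tilde U^{n_k}_r$ is $P'$-a.s.\ a solution of the truncated tree system. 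I will therefore drop the tilde and write $U^{n_k}_r$ for the Skorokhod copy.

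The heart of the argument is then to pass $F_j(U^{n_k}_r) = 0$ to the limit and obtain $F_j(U^\infty_r) = 0$. Fixing $\omega$ in the full-measure set of convergence, for every fixed node $j$ the uniform convergence bound
\[
\sup_{t\in[0,T]} |U^{n_k}_{r,j}(\omega,t) - U^\infty_{r,j}(\omega,t)| \le c_j^{-s}\,\|U^{n_k}_r(\omega,\cdot)-U^\infty_r(\omega,\cdot)\|_{C(0,T;H^s)} \longrightarrow 0
\]
holds. Combined with the algebraic identity $ab-\tilde a\tilde b = (a-\tilde a)b + \tilde a(b-\tilde b)$ and the uniform-in-$k$ boundedness of $\sup_t |U^{n_k}_{r,j}(\omega,t)|$ (itself a consequence of the uniform convergence), each integrand $c_j U^{n_k}_{r,\bar\jmath}(\omega,\cdot)^2$, $c_k U^{n_k}_{r,j}(\omega,\cdot)U^{n_k}_{r,k}(\omega,\cdot)$, and so on, converges uniformly on $[0,T]$ to its $U^\infty_r$ counterpart, exactly as in the two estimates displayed at the end of Section~\ref{sec:randsol}.

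The only place where the tree structure makes an appearance that the dyadic proof does not cover is the passage to the limit inside the finite sums $\sum_{k\in\mathcal{O}_j} c_k X_j X_k$ and $\sum_{k\in\mathcal{O}_j} d_j X_k^2$, and this is where I expect the main (mild) obstacle to sit. It is resolved by the structural assumption that $|\mathcal{O}_j|\le M$ uniformly in $j$: each of these sums has at most $M$ terms, each converging uniformly in $t$, so linearity of the integral allows interchanging the finite sum with the limit, yielding the desired componentwise integral identity $F_j(U^\infty_r)=0$ for $P'$-a.e.\ $\omega$ and every $j$. This identifies $(\Omega',\mathcal{F}',P',U^\infty_r)$ as a random solution for the tree model, completing the argument.
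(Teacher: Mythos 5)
Your proposal follows the same route as the paper's proof: Corollary~\ref{compactness2} for tightness, the space $H^{0-}$ with its metric, Skorokhod representation to upgrade weak convergence to almost sure convergence, the equal-laws argument via the operator $F_j$ to keep the Skorokhod copies solutions of the truncated system, and componentwise passage to the limit in the integral equation. The only difference is that where the paper dismisses the final limit passage as ``straightforward \dots\ as done in the dyadic case,'' you spell it out, correctly identifying the bound $|\mathcal{O}_j|\le M$ as the one ingredient needed beyond the dyadic estimates to interchange the finite offspring sums with the limit.
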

\begin{proof}
    From Corollary \ref{compactness2} we have the existence of a subsequence $\{ n_k\}_{k\in\mathbb{N}} \subset \mathbb{N}$ such that the sequence of laws of the random variables $U_r ^{n_k}$ converges in the topology of $C(0,T;H^s)$ and $L^p (0,T; H^s)$ for every $p\geq 1$. 
    
    Since we have the convergence for any $s<0$, we can consider the space $$H^{0-} = \bigcap_{s<0} H^s,$$ endowed with the metric generated by the distance 
    $$
        d(x,\tilde{x}) = \sum_{n=1}^{\infty}2^{-n} (\|x-\tilde{x} \|_{H^{-\frac{1}{n}}} \wedge 1).
    $$ 
    Note that with this metric we have that $x_n \rightarrow ^dx \Leftrightarrow x_n \rightarrow^{H^s} x$ for any $s<0$.
    
    We can assume, using Skorokhod representation Theorem, that almost surely $\tilde{U}_r ^{n_k}$ converges to $U_r ^{\infty}$, up to replace the abstract space $(\Omega, \mathcal{F} , P )$ where $\tilde{U}_r ^N$ are defined with another abstract probability space $(\Omega ', \mathcal{F} ' , P ' )$, in the topology of $C(0,T;H^{0-})$ and $L^p (0,T; H^{0-})$ for every $p\geq 1$. 
    
    The new sequence of random variables $\tilde{U}_r ^{n_k}$ has the same law of ${U}_r ^{n_k}$, this means that for every $\varphi$ measurable function it holds $E[\varphi(\tilde{U}_r ^{n_k})]=E[\varphi({U}_r ^{n_k})]$. Hence, considering the operator 
    $$
        F_i(x(\omega,t))=x_i (\omega,t) -x_i (\omega,0) - \alpha \int_0^t  (c_j x^2 _{\bar\jmath} (\omega,s) - \sum_{k\in \mathcal{O}_j} c_k x_j (\omega,s) x_k(\omega,s))ds + 
    $$ 
    $$
        + \beta\int_0^t (d_{\bar\jmath} x_{\bar\jmath} (\omega,s) x_j (\omega,s) - \sum_{k\in \mathcal{O}_j} d_j x_k ^2(\omega,s)) ds
    $$
    we have that $E[|F_i (\tilde{U}_r ^{n_k})|]=E[|F_i ({U}_r ^{n_k})|]$, hence $\tilde{U}_r ^{n_k}$ is still almost surely a solution of the truncated tree model. So for now on we consider without loss of generality ${U}^n_r =\tilde{U}^n_r$.
  
    It is then straightforward to check that the limit solves the equation in the integral form, as done in the dyadic case, so $U^{\infty} _r $ is a random solution for the tree model.
 \end{proof}

\end{section}

\bibliographystyle{abbrv}\bibliography{monta}
\end{document}